\documentclass[twoside, 11pt]{entics}
\usepackage{enticsmacro}
\usepackage{mathtools}
\usepackage{xcolor}
\usepackage{float}
\usepackage{tikz, tikz-cd, mathtools, amssymb, stmaryrd}
\usepackage[mode=buildmissing]{standalone}

\usepackage{mathpartir}

\def\conf{MFPS 2026}
\volume{NN}

\def\lastname{Lynch and Rischel et. al.}

\begin{document}

\begin{frontmatter}
  \title{Clock systems for stochastic and non-deterministic categorical systems theories}
  \author{Owen Lynch}
  \author{David Jaz Myers}
  \author{Eigil Rischel}
  \author{Sam Staton}
\end{frontmatter}

\maketitle\section{Introduction}\label{rt-001D}\subsection{Overview}\label{rt-0011}
One of the pervasive insights of category theory is that many interesting functors are \emph{representable}. A functor \(F \colon  \mathsf {C} \to  \mathsf {Set}\) is (covariantly) representable if \(F(X) \cong  \mathsf {C}(Y, X)\) for some fixed \(Y \colon  \mathsf {C}\) (we call \(Y\) the representing object for \(F\)). In systems theory, we often study \emph{behaviors} of systems via functors to \(\mathsf {Set}\); the domain of the functor is some category of systems, and we send each system to its set of possible behaviors. Very often, these “behavior functors” end up being representable, and we call the systems which represent these behavior functors \emph{clock systems}.

The story becomes more complicated when we consider \emph{open systems} and the operation of composition for open systems. Now instead of a category of systems, we have a category of systems indexed over a double category of interfaces, where the two directions in the double category represent \emph{composition patterns} and \emph{interface morphisms} respectively \cite{myers-2023-categorical}. Now clock systems are desirable because behavior functors are much more complex (being a functor of indexed categories), and defining a behavior functor as the representable functor for a fixed system greatly simplifies the task of checking all of the various conditions that must be satisfied for such a functor.

It was previously not known whether systems theories for stochastic and nondeterministic systems supported clock systems. In this paper we demonstrate clock systems which replicate classical notions of behavior for stochastic systems, and analogues to these clock systems suggest new notions of behavior for nondeterministic systems.
\subsection{Moore machines for a monad, morphisms of Moore machines, and clocks}\label{rt-001A}
In this section, we review the main concepts of the paper in a simplified setting.

We consider Moore machines whose branching behaviour is governed by a monad, which is a special case of categorical systems theory. We explain how a “clock object” can be regarded as representing the trajectories of a system, and we phrase our main results in this way.

Recall that a Moore machine comprises a set of states \(X\), an input alphabet \(A^-\), an output alphabet \(A^+\), and output and transition functions:
\begin{equation} f:X\to  A^+, \quad  f^\sharp :X\times  A^-\to  X\text . \end{equation}
An old and central idea of coalgebraic systems (e.g. \cite{rutten-2000-universal}) is to generalize this by considering a category \(\mathcal {S}\) with finite products and
a strong monad \(M\) on it. Then an \(M\)-Moore machine \((X,A^+,A^-,f,f^\sharp)\) is given by three objects of \(\mathcal {S}\): states \(X\), input alphabet \(A^-\), and output alphabet \(A^+\), and transition morphisms in \(\mathcal {S}\) too:
\begin{equation} f:X\to  A^+,\quad  f^\sharp :X\times  A^-\to  M(X)\text . \end{equation}
Four crucial examples illustrate how this subsumes various traditional notions of machine:
\begin{itemize}\item{}An ordinary Moore machine arises when \(\mathcal {S}=\mathsf {Set}\) and \(M\) is the identity monad;
\item{}When \(\mathcal {S}=\mathsf {Set}\) and \(P\) is the powerset monad, we have a non-deterministic finite automaton together with outputs at each state, as might be called a “non-deterministic Moore machine”;
\item{}When \(\mathcal {S}=\mathsf {Set}\) and \(D\) is the finite probability distributions monad, we have a probabilistic finite automaton together with outputs at each state, as might be called a “stochastic Moore machine” (see e.g. \cite{sokolova-2011-probabilistic}).
\item{}When \(\mathcal {S}=\mathsf {Meas}\), the category of measurable spaces, and \(G\) is the Giry monad for probability, we have a probabilistic automaton together with outputs at each state. Now where the states, inputs, and outputs may be uncountable spaces such as \(\mathbb {R}\) and we have a full range of continuous probability distributions, including Gaussian distributions. Structures such as these appear in Markov processes and Markov decision processes.\end{itemize}
A morphism of \(M\)-Moore machines
\begin{equation}(x,a,a^\flat ): (X_1,A_1,f_1)\to  (X_2,A_2,f_2)\end{equation}
is given by functions \(x : X_1 \to  X_2\), \(a:A^+_1\to  A^+_2\), \(a^\flat :A^+_1\times  A^-_1\to  A^-_2\), such that the following diagrams commute.
\begin{center}
\begin {tikzcd}
 {X_1} & {A^+_1} \\
 {X_2} & {A^+_2}
 \arrow ["{f_1}", from=1-1, to=1-2]
 \arrow ["{f_2}"', from=2-1, to=2-2]
 \arrow ["{x}"', from=1-1, to=2-1]
 \arrow ["{a}", from=1-2, to=2-2]
\end {tikzcd}
\quad 
\begin {tikzcd}
 {X_1\times  A^-_1} & {M(X_1)} \\
 {X_1 \times  A^+_1 \times  A^-_1} \\
 {X_2\times  A^-_2} & {M(X_2)}
 \arrow ["{f_1^\sharp }", from=1-1, to=1-2]
 \arrow ["{M(x)}", from=1-2, to=3-2]
 \arrow ["{(\mathrm {id},f_1)\times  A^-_1}"', from=1-1, to=2-1]
 \arrow ["{x\times  a^\flat }"', from=2-1, to=3-1]
 \arrow ["{f_2^\sharp }"', from=3-1, to=3-2]
\end {tikzcd}
\end{center}
We thus form categories \(\mathsf {Sys}(M)\) of Moore machines for each strong monad \(M\). 

The literature already suggests notions of clock and trajectory for different kinds of \(M\)-Moore machines. In an ordinary Moore machine, a trajectory is a sequence of inputs, states, and outputs, that arises from stepping through the machine, with a clock counting the steps. In a Markov process, a trajectory is a sequence of random inputs, states and outputs; note that these are random variables in this case, and are indexed by a filtration, which plays the role of a clock. The different notions of clock and trajectory are ad hoc, and vary pragmatically for different \(M\)-Moore machines. Nonetheless, we show that for three of the above cases they are representable, in the following sense. There is a “clock system” \(M\)-Moore machine \(C \in  \mathsf {Sys}(M)\) and the representable hom-functor
\begin{equation}\mathsf {Sys}(M)(C,-):\mathsf {Sys}(M)\to  \mathsf {Set}\end{equation}
associates each machine with its trajectories.
\begin{itemize}\item{}In the deterministic case, the universal trajectory \(C\) is a clock machine, that updates a counter. This is already well-known in double-categorical systems theory.
\item{}In the measurable stochastic case, the universal trajectory \(C\) is a machine that produces a growing stream of random seeds. This is the first contribution of this article.
\item{}In the non-deterministic case, the universal trajectory \(C\) is a machine produces a growing stream of nondeterministic seeds. We also consider a generalization to decorated paths through a graph. This is the second contribution of this article.\end{itemize}
In the rest of the paper, we will elaborate on this simplified account and make it work within a double categorical picture, which accounts for the compositionality of Moore machines and consequently, the compositionality of their behaviors (\cite{myers-2021-double}, \cite{myers-2023-categorical}, \cite{myers-2025-doctrines}). In \cite{boccali-2023-bicategories} they also consider compositionality of Moore machines; the difference between this approach and the double categorical approach is that we consider morphisms between systems which may change the interface. Such morphisms are crucial for clock systems to exist.
\subsection{Organization of the rest of the paper}\label{rt-001C}
This paper is organized as follows. First in \cref{rt-0008} we give a general overview of the type of systems theory we are using (systems theories constructed from \emph{tangencies}, or in other words systems theories of generalized Moore machines). Then in \cref{rt-0009} we review the theory of representable behaviors for discrete-time deterministic Moore machines. This serves \cref{rt-000A}, which is where \cref{rt-000Q} is found characterizing behaviors of discrete-time stochastic Moore machines. In \cref{rt-000V}, we give the analogous construction for discrete-time non-deterministic Moore machines, which turns out to be quite simple and admits an interesting generalization to non-linear time. Finally, we give some corollaries of representable behavior in \cref{rt-0015}, and conclude with some proposed future extensions in \cref{rt-000U}.
\section{Systems theories via tangencies}\label{rt-0008}
In this section, we review the construction of systems theories via tangencies. Most of this can be found in \cite{myers-2023-categorical}, but we adopt the new term “tangency” from \cite{myers-2025-doctrines}.

We are mainly interested in the systems theory presented in \cref{rt-000H}, however to understand behaviors in this systems theory, it is useful to have the contrast class of \cref{rt-000G}; thus we briefly review the general construction so that the analogy between the two systems theories can be made clearer.

\begin{construction}[{Lenses and charts from an indexed category}]\label{rt-000D}
This construction is a recap of section 4 of \cite{myers-2021-double}.

Let \(\mathcal {S}\) be a category and \(\mathcal {B} \colon  \mathcal {S}^\mathrm {op} \to  \mathsf {Cat}\) an indexed category over \(\mathcal {S}\). The attitude we will take towards \(\mathcal {S}\) and \(\mathcal {B}\) is that \(\mathcal {S}\) is a category of spaces, and \(\mathcal {B}(X)\) is a category of bundles over \(X \colon  \mathcal {S}\). We will use the notation \(\mathcal {B}(f) = f^\ast  \colon  \mathcal {B}(B) \to  \mathcal {B}(A)\), in line with the view of \(f^\ast \) as being some kind of “pullback operation”.

We may then construct a double category \(\iint  \mathcal {B}\) in the following way.
\begin{itemize}\item{}The objects of \(\iint  \mathcal {B}\) are pairs \((A^+ \colon  \mathcal {S}, A^- \colon  \mathcal {B}(A))\). We call these \textbf{arenas}, and in systems theory, these play the role of \emph{interfaces}.
\item{}The tight arrows of \(\iint  \mathcal {B}\) from \(A_1\) to \(A_2\) are pairs \((a \colon  A_1^+ \to  A_2^+, a^\flat  \colon  A_1^- \to  a^\ast (A_2^-))\). We call these \textbf{charts}, and in systems theory these play the role of \emph{interface morphisms}.
\item{}The loose arrows of \(\iint  \mathcal {B}\) from \(A\) to \(B\) are pairs \((f \colon  A^+ \to  B^+, f^\sharp  \colon  f^\ast (B^-) \to  A^-)\). We call these \textbf{lenses}, and in systems theory these play the role of \emph{composition patterns}.
\item{}A 2-cell exists inside a square
  \begin{center}
\begin {tikzcd}
	{A_1} & {B_1} \\
	{A_2} & {B_2}
	\arrow ["{f_1}", "\shortmid "{marking}, from=1-1, to=1-2]
	\arrow ["a"', from=1-1, to=2-1]
	\arrow ["b", from=1-2, to=2-2]
	\arrow ["{f_2}"', "\shortmid "{marking}, from=2-1, to=2-2]
\end {tikzcd}
\end{center}

precisely when the following two diagrams commute

  \begin{center}
\begin {tikzcd}
	{A_1^+} & {B_1^+} & {A_1^-} && {f_1^\ast (B_1^-)} \\
	{A_2^+} & {B_2^+} & {a^\ast (A_2^-)} & {a^\ast (f_2^\ast (B_2^-))} & {f_1^\ast (b^\ast (B_2^-))}
	\arrow ["{f_1}", from=1-1, to=1-2]
	\arrow ["a"', from=1-1, to=2-1]
	\arrow ["b", from=1-2, to=2-2]
	\arrow ["{a^\flat }"', from=1-3, to=2-3]
	\arrow ["{f_1^\sharp }"', from=1-5, to=1-3]
	\arrow ["{f_1^\ast (b^\flat )}", from=1-5, to=2-5]
	\arrow ["{f_2}"', from=2-1, to=2-2]
	\arrow ["{a^\ast (f_2^\sharp )}", from=2-4, to=2-3]
	\arrow [equals, from=2-4, to=2-5]
\end {tikzcd}
\end{center}

We call these \textbf{lens-chart squares}, and in the systems theory these play the role of \emph{morphisms of composition patterns}.
\end{itemize}\end{construction}

\begin{example}[{Bundles of arrows}]\label{rt-000F}
For a category \(\mathcal {S}\) with finite limits, we may define \(\mathcal {B} \colon  \mathcal {S}^\mathrm {op} \to  \mathsf {Cat}\) by \(A \mapsto  \mathcal {S} /A\), with action on morphisms defined by pullback.

We can also consider sending \(A\) to some subcategory of \(\mathcal {S} /A\), such as the subcategory \(\mathsf {Ctx}_A\) of product projections \(A \times  F \to  A\); note that this works even if \(\mathcal {S}\) doesn't have all finite limits but does have products. We can identify an object \(A \times  B \to  A\) in \(\mathsf {Ctx}_A\) with its fiber \(B\), but importantly, a morphism \(B \to  B'\) in \(\mathsf {Ctx}_A\) corresponds to a morphism \(A \times  B \to  B'\) in \(\mathcal {S}\), not just a morphism \(B \to  B'\).
\end{example}

\begin{definition}[{Tangency}]\label{rt-000C}
Suppose that \((\mathcal {S}, \mathcal {B} \colon  \mathcal {S}^\mathrm {op} \to  \mathsf {Cat})\) is an indexed category. Then a \textbf{section} of \(\mathcal {B}\) is a section of the fibration \(\int  \mathcal {B} \to  \mathcal {S}\). Concretely, for each \(X \colon  \mathcal {S}\), \(T(X) \colon  \mathcal {B}(X)\), and for \(f \colon  X \to  Y\), \(T(f) \colon  T(X) \to  f^\ast (T(Y))\). A \textbf{tangency} is an indexed category \((\mathcal {S}, \mathcal {B})\) equipped with a section \(T\).
\end{definition}

\begin{construction}[{The systems theory associated with a tangency}]\label{rt-000E}
Given a tangency \((\mathcal {S},\mathcal {B},T)\), there is an associated \emph{systems theory}. Technically speaking, there is a bifunctor from the bicategory of tangencies to the bicategory of systems theories, but rather than fully unpacking that statement, we will give a conceptual overview.

A systems theory consists of:
\begin{itemize}\item{}interfaces
	\item{}interface morphisms
	\item{}composition patterns
	\item{}composition pattern morphisms
	\item{}systems
	\item{}system morphisms\end{itemize}
which all together assemble into a mathematical structure which determines how they interact. For instance, each system has an interface. One can use composition patterns to compose systems. Morphisms of systems restrict to morphisms of their interfaces. The interested reader can refer to \cite{libkind-2025-towards}. Without further ado:
\begin{itemize}\item{}An \textbf{interface} is an arena for \((\mathcal {S},\mathcal {B})\).
\item{}An \textbf{interface morphism} is a chart for \((\mathcal {S},\mathcal {B})\).
\item{}A \textbf{composition pattern} is a lens for \((\mathcal {S}, \mathcal {B})\).
\item{}A \textbf{composition pattern morphism} from \(f_1 \colon  A_1 \mathrel {\mkern 3mu\vcenter {\hbox {$\shortmid $}}\mkern -10mu{\to }} B_1\) to \(f_2 \colon  A_2 \mathrel {\mkern 3mu\vcenter {\hbox {$\shortmid $}}\mkern -10mu{\to }} B_2\) is a pair of charts \(a \colon  A_1 \to  A_2\), \(b \colon  B_1 \to  B_2\) such that \(f_1,f_2,a,b\) form a lens-chart square for \((\mathcal {S}, \mathcal {B})\).
\item{}A \textbf{system} on an interface \(A\) consists of an object \(X \colon  \mathcal {S}\) along with a lens \(TX \to  A\).
\item{}A \textbf{system morphism} between a system \(f_1 \colon  TX_1 \to  A_1\) and a system \(f_2 \colon  TX_2 \to  A_2\) consists of a morphism \(x \colon  X_1 \to  X_2\) in \(\mathcal {S}\), an interface morphism \(a \colon  A_1 \to  A_2\), and lens-chart square

  \begin{center}
\begin {tikzcd}
	{TX_1} & {A_1} \\
	{TX_2} & {A_2}
	\arrow ["{f_1}", "\shortmid "{marking}, from=1-1, to=1-2]
	\arrow ["Tx"', from=1-1, to=2-1]
	\arrow ["a", from=1-2, to=2-2]
	\arrow ["{f_2}"', "\shortmid "{marking}, from=2-1, to=2-2]
\end {tikzcd}
\end{center}\end{itemize}
For now, we will primarily focus on the category of systems and system morphisms. To be more precise, let \(\mathsf {Sys}(\mathcal {S}, \mathcal {B}, T)\) be the category where the objects are pairs of an interface \(A\) and a system \(TX \to  A\), and a morphism is as defined above.
\end{construction}

\begin{example}[{The tangency for discrete-time deterministic Moore machines}]\label{rt-000G}
Let \(\mathcal {S} = \mathsf {Set}\), and let \(\mathcal {B}(A) = \mathsf {Ctx}_A\) (as defined in \cref{rt-000F}), with tangent bundle given by \(TX = (X,X)\).

Then a system for \((\mathcal {S}, \mathcal {B}, T)\) on an arena \(A = (A^+, A^-)\) consists of a set \(X\), a function \(f \colon  X \to  A^+\) (which we think of as the “output function” of the system), and a function \(f^\sharp  \colon  X \times  A^- \to  X\) (which we think of as the “update function” of the system).

This recovers the classical notion of Moore machine with input alphabet \(A^-\) and output alphabet \(A^+\).
\end{example}

\begin{example}[{The tangency for discrete-time stochastic Moore machines}]\label{rt-000H}
We will now consider a variant of \cref{rt-000G} in which the update can be stochastic. To do this, let \(\mathcal {S} = \mathsf {Meas}\), and define \(\mathcal {B}(A) = \mathsf {Ctx}_A\). However, this time let the tangent bundle functor be defined by \(X \mapsto  (X,\Delta  X)\), where \(\Delta \) is the Giry monad on the category of measurable spaces. A system for this tangency on an arena \(A\) consists of a measurable space \(X\) along with a measurable map \(X \to  A^+\) and a stochastic kernel \(X \times  A^- \to  X\).

This is a slight variation on what is found in section 2.2 of \cite{myers-2023-categorical}; the difference is here we only allow the systems to be stochastic, not the lenses nor the charts. This is necessary for \cref{rt-000Q} to work out correctly.
\end{example}

\begin{example}[{The tangency for discrete-time nondeterministic Moore machines}]\label{rt-000W}
The tangency for discrete-time nondeterministic Moore machines is almost the same as \cref{rt-000H}, except instead we take \(\mathcal {S} = \mathsf {Set}\) and we replace the Giry monad with the powerset monad \(P\). That is, \(\mathcal {B}(A) = \mathsf {Ctx}_A\), and the tangent bundle \(T\) is defined by \(TX = (X, PX)\).
\end{example}
\section{Representable behaviors for discrete-time deterministic Moore machines}\label{rt-0009}
In this section, we review the classical definition of “behavior” for a discrete-time deterministic Moore machine, and show how this classical definition may be recovered as a representable functor.

\begin{definition}[{Behaviors of discrete-time deterministic Moore machines}]\label{rt-000B}
Given a system \(f \colon  TX \mathrel {\mkern 3mu\vcenter {\hbox {$\shortmid $}}\mkern -10mu{\to }} A\) from the tangency of \cref{rt-000G}, a \textbf{behavior} consists of sequences \(x_1,x_2,\ldots  \in  X\), \(a_1^+,a_2^+,\ldots  \in  A^+\) and \(a_1^-,a_2^-,\ldots  \in  A^-\) such that:
\begin{equation} f^+(x_i) = a_i^+ \end{equation}\begin{equation} f^-(x_i, a_i^-) = x_{i+1} \end{equation}
We define \(B(f)\) to be the set of behaviors for the system \(f\). We now claim that \(B(f)\) is a functor from \(\mathsf {Sys}(\mathsf {Set}, \mathsf {Ctx}, T)\) to \(\mathsf {Set}\), where \((\mathsf {Set}, \mathsf {Ctx}, T)\) is the tangency of \cref{rt-000G}. To see this, suppose that
\begin{center}
\begin {tikzcd}
	{TX_1} & {A_1} \\
	{TX_2} & {A_2}
	\arrow ["{f_1}", "\shortmid "{marking}, from=1-1, to=1-2]
	\arrow ["Tx"', from=1-1, to=2-1]
	\arrow ["a", from=1-2, to=2-2]
	\arrow ["{f_2}"', "\shortmid "{marking}, from=2-1, to=2-2]
\end {tikzcd}
\end{center}
is a morphism of systems and that \(x_{1,i}, a_{1,i}^+, a_{1,i}-\) is a behavior of \(f_1\). Then we may define new sequences \(x_{2,i} = x(x_{1,i}), a_{2,i}^+ = a^+(a_{1,i}^+), a_{2,i}^- = a^-(a_{1,i}^+, a_{1,i}^-)\), and we claim that these new sequences form a behavior of \(f_2\).

We must now show that \(a_{2,i}^+ = f_2(x_{2,i})\) and that \(x_{2,i+1} = f_2^\sharp (x_{2,i}, a_{2,i}^-)\). These can be proven by unwrapping the definitions and then applying the equalities in the definition of system morphism. For the first equality,
\begin{equation}a_{2,i}^+ = a^+(a_{1,i}^+) = a^+(f_1(x_{1,i})) = f_2(x(x_{1,i})) = f_2(x_{2,i})\end{equation}
For the second equality,
\begin{equation}x_{2,i+1} = x(x_{1,i+1}) = x(f_1^\sharp (x_{1,i}, a_{1,i}^-)) = f_2^\sharp (x(x_{1,i}), a^-(a_{1,i}^+, a_{1,i}^-)) = f_2^\sharp (x_{2,i}, a_{2,i}^-)\end{equation}
We are done.
\end{definition}

Showing that \(B\) is representable begins by defining a clock system.

\begin{definition}[{The clock for discrete-time deterministic Moore machines}]\label{rt-000I}
The clock \(c\) is a system on the interface \((\mathbb {N}, 1)\). Its state space is \(\mathbb {N}\), and the lens \(c \colon  T\mathbb {N} \mathrel {\mkern 3mu\vcenter {\hbox {$\shortmid $}}\mkern -10mu{\to }} (\mathbb {N}, 1)\) is defined as follows.
\begin{equation}c(n) = n\end{equation}\begin{equation}c^\sharp (n,\ast ) = n+1\end{equation}
This models an abstract clock which simply “counts up”.
\end{definition}

\begin{proposition}[{Behaviors as maps out of the clock}]\label{rt-000J}
The behavior functor \(B \colon  \mathsf {Sys} \to  \mathsf {Set}\) as constructed in \cref{rt-000B} is represented by maps out of the clock system in \cref{rt-000I}.
\end{proposition}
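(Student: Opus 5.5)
We will exhibit a natural bijection \(\mathsf{Sys}(c, f) \cong B(f)\) for every system \(f \colon TX \mathrel{\mkern 3mu\vcenter{\hbox{$\shortmid$}}\mkern -10mu{\to}} A\). The approach is to unwind what a system morphism out of the clock \(c\) of \cref{rt-000I} is. By \cref{rt-000E}, such a morphism consists of three pieces of data:
\begin{itemize}
\item a function \(x \colon \mathbb{N} \to X\);
\item a chart \((a, a^\flat) \colon (\mathbb{N}, 1) \to (A^+, A^-)\), that is, a function \(a \colon \mathbb{N} \to A^+\) together with \(a^\flat \colon \mathbb{N} \times 1 \to A^-\), which by \cref{rt-000F} is just a function \(\mathbb{N} \to A^-\);
\item the requirement that these form a lens-chart square.
\end{itemize}
Setting \(x_i = x(i)\), \(a^+_i = a(i)\) and \(a^-_i = a^\flat(i, \ast)\) gives exactly three sequences, indexed by \(\mathbb{N}\) (the indexing starting point is immaterial).

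Next we spell out the two commuting conditions of the lens-chart square in \cref{rt-000D}, specialized to \(\mathsf{Ctx}\) as in the diagram in \cref{rt-001A} with \(M\) the identity.
\begin{itemize}
\item The first square reads \(f(x(n)) = a(c(n)) = a(n)\), i.e.\ \(f(x_i) = a^+_i\).
\item The second square reads \(x(c^\sharp(n, \ast)) = f^\sharp(x(n), a^\flat(c(n), \ast))\), i.e.\ \(x_{i+1} = f^\sharp(x_i, a^-_i)\).
\end{itemize}
These are precisely the defining equations of a behavior in \cref{rt-000B}. Conversely, any behavior determines the functions \(x, a, a^\flat\) by the same formulas, and these two assignments are mutually inverse. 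This gives a bijection \(\Phi_f \colon \mathsf{Sys}(c, f) \to B(f)\).

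Finally we check naturality. Given a system morphism \((y, b, b^\flat) \colon f_1 \to f_2\), postcomposition on \(\mathsf{Sys}(c, f_1)\) sends \((x, a, a^\flat)\) to \((y \circ x,\; b \circ a,\; n \mapsto b^\flat(a(n), a^\flat(n, \ast)))\). The last component comes from composing charts in \(\mathsf{Ctx}\), where the fiber map \(b^\flat\) takes the base point \(a(n)\) as context. This matches exactly the action of \(B\) on morphisms given in \cref{rt-000B}: \(x_{2,i} = y(x_{1,i})\), \(a^+_{2,i} = b(a^+_{1,i})\), \(a^-_{2,i} = b^\flat(a^+_{1,i}, a^-_{1,i})\). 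Hence \(\Phi\) is natural.

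The only step requiring care is the correct unwinding of the \(\mathsf{Ctx}\)-morphisms: the reindexing \(a^\ast\), the contexts in \(a^\flat\), and the composite of charts. Getting these right is what makes the second lens-chart condition and the naturality square literally coincide with the formulas of \cref{rt-000B}. Everything else is a direct translation.
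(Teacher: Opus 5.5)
Your proposal is correct and follows the same route as the paper: you unwind a system morphism out of the clock into \(x\), \(a\), \(a^\flat\), and you read the two lens-chart conditions as \(f(x_i)=a^+_i\) and \(x_{i+1}=f^\sharp(x_i,a^-_i)\). You also check naturality explicitly, using the composite chart \(n \mapsto b^\flat(a(n), a^\flat(n,\ast))\) in \(\mathsf{Ctx}\); this is correct, and the paper's proof leaves it implicit.
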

\begin{proof}
A system morphism from \(c\) to \(f \colon  TX \to  A\) consists of a function \(x \colon  \mathbb {N} \to  X\) and a chart \(a \colon  (\mathbb {N}, 1) \to  A\), along with two commutative squares. We will see that these commutative squares precisely correspond to \(x_i = x(i)\), \(a_i^+ = a^+(i)\), \(a_i^- = a^-(i, \ast )\) being a behavior of the system.

The first commutative square is:
\begin{center}
\begin {tikzcd}
	\mathbb {N} & \mathbb {N} \\
	X & {A^+}
	\arrow [equals, from=1-1, to=1-2]
	\arrow ["x"', from=1-1, to=2-1]
	\arrow ["{a^+}", from=1-2, to=2-2]
	\arrow ["{f^+}"', from=2-1, to=2-2]
\end {tikzcd}
\end{center}
This commutative square corresponds to the condition that \(a_i^+ = f^+(x_i)\).

The second commutative square is:
\begin{center}
\begin {tikzcd}
	{\mathbb {N} \times  1} & \mathbb {N} \\
	{X \times  A^-} & X
	\arrow ["{+1}", from=1-1, to=1-2]
	\arrow ["{x \times  a^-}"', from=1-1, to=2-1]
	\arrow ["x", from=1-2, to=2-2]
	\arrow ["{f^-}"', from=2-1, to=2-2]
\end {tikzcd}
\end{center}
This commutative square corresponds to the condition that \(x_{i+1} = f^-(x_i, a_i^-)\). We are done.
\end{proof}
\section{Representable behaviors for discrete-time stochastic Moore machines}\label{rt-000A}
We can try to define a clock system in the systems theory for the tangency of \cref{rt-000H} in the same way as \cref{rt-0009}, using \(\mathbb {N}\) as the state space. However, the morphisms out of such a clock are much less interesting; they do not represent stochastic behaviors in any reasonable sense.

In this section, we remedy this by first reviewing a good classical notion of behavior for stochastic Moore machines, and then showing that this notion of behavior is captured by a representable functor.

However before we begin, we must address a conflict in notation between probability theory and category theory. In probability theory, it is conventional to denote random variables by uppercase letters \(X, Y, \ldots \). However, in category theory, it is much more natural to denote a random variable as a lowercase letters, as a random variable is a morphism \(x \colon  \Omega  \to  \mathbb {R}\). Moreover, we typically use uppercase letters to denote the objects of a category.

We resolve this conflict in favor of category theory, and use lowercase letters for random variables while discussing probability theory. We hope that the reader will graciously accommodate this eccentricity in service of a greater consistency.

In order to review the classical notion of behavior, we must recall some basic definitions from measure theory; the following four definitions form the basis for saying what it means for a stochastic process to be a Markov process for a particular kernel \(X \to  \Delta (X)\).

\begin{definition}[{Filtration}]\label{rt-000L}
Given a measurable space \((\Omega , \mathcal {F}_\Omega )\), a \textbf{filtration} \(\mathbb {F}\) on \(\Omega \) is an ascending sequence \(\mathcal {F}_1 \subset  \mathcal {F}_2 \subset  \cdots  \subset  \mathcal {F}_\Omega \) of sub-\(\sigma \)-algebras of \(\mathcal {F}_\Omega \).
\end{definition}

\begin{definition}[{Process adapted to a filtration}]\label{rt-000M}
Given a measurable space \((\Omega , \mathcal {F}_\Omega )\) and a filtration \(\mathbb {F} = \mathcal {F}_1 \subset  \mathcal {F}_2 \subset  \cdots \) on it, a sequence of \(X\)-valued variables \(x_1, x_2, \ldots  \colon  \Omega  \to  X\) is \textbf{adapted} to \(\mathbb {F}\) if \(x_i\) is measurable with respect to \(\mathcal {F}_i\) for every \(i\).
\end{definition}

\begin{definition}[{Conditional expectation}]\label{rt-000K}
Suppose that \((\Omega , \mathcal {F}_\Omega , P)\) is a probability space. Let \(\mathcal {G}\) be a sub-\(\sigma \)-algebra of \(\mathcal {F}_\Omega \), and finally suppose that \(x \colon  \Omega  \to  \mathbb {R}\) is measurable with respect to \(\mathcal {F}_\Omega \) (but not necessarily \(\mathcal {G}\)).

Then a conditional expectation \(\mathbb {E}_P(x|\mathcal {G})\) is defined to be any \(\mathcal {G}\)-measurable random variable such that for all \(U \in  \mathcal {G}\),
\begin{equation}\int _U \mathbb {E}(x|\mathcal {G}) \,\mathrm {d}\mu  = \int _U x \,\mathrm {d}\mu \end{equation}
One can prove that conditional expectations always exist and are \(P\)-almost unique, and that is what justifies the phrase “the” conditional expectation.
\end{definition}

\begin{definition}[{Conditional probability}]\label{rt-000O}
Suppose that \((\Omega , \mathcal {F}_\Omega , P)\) is a probability space. Let \(\mathcal {G}\) be a sub-\(\sigma \)-algebra of \(\mathcal {F}_\Omega \), and let \(x \colon  \Omega  \to  X\) be a \(X\)-valued random variable on \(\Omega \), where \((X, \mathcal {F}_X)\) is a measurable space.

Then given any \(V \in  \mathcal {F}_X\), let \(\chi _{V} \circ  x \colon  \Omega  \to  \mathbb {R}\) be the composite \(\Omega  \xrightarrow {x} X \xrightarrow {\chi _{V}} \mathbb {R}\) where \(\chi _{V}\) is the indicator function for \(V\); this always produces a \(\mathcal {G}\)-measurable random variable \(\mathbb {E}_P(\chi _{V} \circ  x|\mathcal {G}) \colon  \Omega  \to  \mathbb {R}\). These assemble into a \(\mathcal {G}\)-measurable function \(P(x|\mathcal {G}) \colon  \Omega  \to  \Delta (X)\), defined up to \(P\)-almost sure equality by
\begin{equation}P(x|\mathcal {G})(\omega  \in  \Omega )(V \in  \mathcal {F}_X) = \mathbb {E}_P(\chi _{V} \circ  x|\mathcal {G})(\omega ).\end{equation}
We call this the conditional probability for \(x\) given \(\mathcal {G}\).
\end{definition}

As promised, this allows us to talk about Markov processes with a law.

\begin{definition}[{Markov process with a law}]\label{rt-001G}

Given a stochastic kernel \(s \colon  X \to  \Delta (X)\), an \(\mathbb {F}\)-adapted process \(x_1, x_2,\ldots  \colon  \Omega  \to  X\) on the probability space \((\Omega , \mathcal {F}_\Omega , P)\) is said to be Markov with law \(s\) if for all \(i \colon  \mathbb {N}\),
\begin{equation}P(x_{i+1}|\mathcal {F}_i)(\omega ) = s(x_i(\omega ))\quad \text {$P$-almost everywhere}.\end{equation}
In other words, the probability distribution over \(x_{i+1}\) conditioned on “the information we have up to time \(i\)” is given by \(s(x_{i})\).
\end{definition}

The extension to stochastic Moore machines (which might be thought of as “open” Markov processes) is straightforward.

\begin{definition}[{Behavior of a discrete-time stochastic Moore machine}]\label{rt-000N}
Let \((\Omega , \mathcal {F}_\Omega , P)\) be a probability space, and let \(\mathbb {F} = \mathcal {F}_1 \subset  \mathcal {F}_2 \subset  \cdots \) be a filtration on it.

Then let \(f \colon  TX \to  A\) be a discrete-time stochastic Moore machine (that is, a system for the tangency of \cref{rt-000H}). A behavior for \(f\) with respect to \((\Omega , \mathcal {F}_\Omega , P, \mathbb {F})\) consists of \(\mathbb {F}\)-adapted processes \(x_1, x_2, \ldots  \colon  \Omega  \to  X\), \(a_1^+, a_2^+, \ldots  \colon  \Omega  \to  A^+\), and \(a_1^-, a_2^-, \ldots  \colon  \Omega  \to  A^-\), such that:

\begin{equation}a_i^+(\omega ) = f(x_i(\omega ))\end{equation}\begin{equation}P(x_{i+1}|\mathcal {F}_i)(\omega ) = f^\sharp (x_i(\omega ), a^-_i(\omega ))(x_{i+1})\end{equation}
Just as in \cref{rt-000B}, behaviors for a fixed \((\Omega , \mathcal {F}_\Omega , P, \mathbb {F})\) form a functor \(B_{(\Omega , F_\Omega , P, \mathbb {F})}\) from \(\mathsf {Sys}\) to \(\mathsf {Set}\), and the proof is essentially the same.
\end{definition}

Note that a behavior could only be captured by a system morphism out of the deterministic clock of \cref{rt-000I} if \(f^\sharp (x_i, a_i^-)\) were a delta distribution for all \(i\), which is not very interesting.

However, we can choose a different clock system and recover representability of behaviors.

\begin{definition}[{Stochastic clock system}]\label{rt-000P}
Fix a probability space \((\Omega , \mathcal {F}_\Omega , P)\) and a filtration \(\mathbb {F}\) on it. Let
\begin{equation}\tilde {\Omega } = \sum _{i \colon  \mathbb {N}} (\Omega , \mathcal {F}_i)\end{equation}
Then form a system on the interface \((\tilde {\Omega }, 1)\) with state space \(\tilde {\Omega }\) and lens \(c \colon  T\tilde {\Omega } \to  (\tilde {\Omega }, 1)\) defined by
\begin{equation}c((i, \omega )) = (i, \omega )\end{equation}\begin{equation}c^\sharp ((i, \omega ), \ast ) = (i + 1, P(1_\Omega  | \mathcal {F}_i)(\omega )) \end{equation}
where \((i + 1, P(1_\Omega  | \mathcal {F}_i)(\omega ))\) refers to the distribution on \(\tilde {\Omega }\) given by pushing forward \(P(1_\Omega  | \mathcal {F}_i)(\omega )\) along the map \(\Omega  \to  \tilde {\Omega }\), \(\omega  \mapsto  (i+1, \omega )\).

The intuition for this is that \(c^\sharp \) resamples the current \(\omega \), only “remembering” the information contained in \(\mathcal {F}_i\), while deterministically incrementing the time.

As conditional probability is only defined up to almost-sure equality, technically speaking there are many clock systems for a given \((\Omega , \mathcal {F}_\Omega , P)\) corresponding to different choices of \(P(1_\Omega  | \mathcal {F}_i) \colon  \Omega  \to  \Delta  \Omega \). We will continue to say “the clock system” for \((\Omega , \mathcal {F}_\Omega , P)\), and by this we mean some fixed choice of \(P(1_\Omega |\mathcal {F}_i)\) for each \(i\).
\end{definition}

We now build up some theory for characterizing system morphisms out of the clock system.

\begin{remark}[{Adapted processes are maps out of \(\tilde {\Omega }\)}]\label{rt-000S}
Given a measurable space \((\Omega , \mathcal {F}_\Omega )\) and filtration \(\mathbb {F}\), the set of measurable maps from \(\tilde {\Omega }\) to \(X\) is bijective with the set \(X\)-valued processes adapted to \(\mathbb {F}\), by the universal property of the coproduct and the definition of adaptation.
\end{remark}

\begin{lemma}[{Pushforward of conditional probability}]\label{ocl-00CW}
Suppose that \((\Omega , \mathcal {F}_\Omega , P)\) is a probability space and \(\mathcal {F}' \subset  \mathcal {F}_\Omega \) is a sub-\(\sigma \)-algebra of \(\mathcal {F}_\Omega \). Let \(x \colon  \Omega  \to  X\) and \(f \colon  X \to  Y\) be measurable functions. Then
\begin{equation}(\Delta  f)P(x | \mathcal {F}') = P(f \circ  x | \mathcal {F}')\end{equation}
where \(\Delta  f \colon  \Delta  X \to  \Delta  Y\) is the pushforward of probability distributions associated with \(f\).
\end{lemma}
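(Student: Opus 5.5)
The plan is to unfold both sides using \cref{rt-000O} and compare them as \(\mathcal {F}'\)-measurable maps \(\Omega \to \Delta Y\) up to \(P\)-almost sure equality. Since both sides are only determined up to \(P\)-null sets, the statement is to be read as an almost-sure equality. The idea is that both sides are conditional expectations of the same indicator variable, so the almost-sure uniqueness of conditional expectation (\cref{rt-000K}) settles the matter.

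The first step is to evaluate the left-hand side at a measurable set \(W \in \mathcal {F}_Y\). By the definition of pushforward,
\begin{equation}((\Delta f)P(x|\mathcal {F}')(\omega ))(W) = P(x|\mathcal {F}')(\omega )(f^{-1}(W)) = \mathbb {E}_P(\chi _{f^{-1}(W)} \circ x \mid \mathcal {F}')(\omega ).\end{equation}
For the right-hand side, \cref{rt-000O} applied to \(f \circ x\) gives
\begin{equation}P(f\circ x|\mathcal {F}')(\omega )(W) = \mathbb {E}_P(\chi _W \circ f \circ x \mid \mathcal {F}')(\omega ).\end{equation}
The observation that does the work is \(\chi _W \circ f = \chi _{f^{-1}(W)}\). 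Hence both expressions are conditional expectations of the same real random variable, and by almost-sure uniqueness they agree outside a \(P\)-null set \(N_W \in \mathcal {F}'\). The left-hand side is also \(\mathcal {F}'\)-measurable as a map to \(\Delta Y\), because \(\Delta f\) is measurable.

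The main obstacle is passing from "for each \(W\), equal almost surely" to "equal as \(\Delta Y\)-valued maps almost surely". There are uncountably many \(W\), so the union of the null sets \(N_W\) need not be null. I would first try to use that the \(\sigma \)-algebra on \(\Delta Y\) is generated by the evaluation maps \(\mu \mapsto \mu (W)\). Almost-sure equality of \(\Delta Y\)-valued random variables is presumably understood in the paper through a regular version of conditional probability, or else componentwise. If \(\mathcal {F}_Y\) is countably generated, I would take a countable \(\pi \)-system generating it and take the union of the countably many null sets. On the complement of that union, both sides are probability measures agreeing on the generating \(\pi \)-system, so they are equal by the \(\pi \)-\(\lambda \) theorem. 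In full generality I would instead interpret the equality componentwise, that is, for every \(W\) almost surely. That is exactly what the computation above gives, and it is the form needed later for checking the Markov condition of \cref{rt-000N}.
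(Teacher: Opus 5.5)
The paper states this lemma without proof; it is treated as a standard fact and only invoked in the proof of \cref{rt-000Q}. So there is no argument to compare against, and your proof is correct and is the standard one.

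The identity \(\chi_W\circ f=\chi_{f^{-1}(W)}\), combined with your observation that \((\Delta f)\circ P(x|\mathcal{F}')\) is \(\mathcal{F}'\)-measurable into \(\Delta Y\), shows that the left-hand side is itself a legitimate choice of \(P(f\circ x|\mathcal{F}')\). This is exactly the form in which \cref{rt-000Q} uses the lemma: the pushforward of the fixed choice \(P(1_\Omega|\mathcal{F}_i)\) is taken as a version of \(P(x_{i+1}|\mathcal{F}_i)\).

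Your caveat about the general case is not just caution; it is necessary. Take \(\Omega=[0,1]\) with Lebesgue measure, \(\mathcal{F}'\) the Borel sets, \(X=Y=[0,1]\) with the countable--cocountable \(\sigma\)-algebra, and \(x,f\) the identities. Then both \(\omega\mapsto\delta_\omega\) and the constant kernel at the law of \(x\) are versions of \(P(x|\mathcal{F}')\), yet they differ at every \(\omega\). So when \(\mathcal{F}_Y\) is not countably generated, the equation can only hold componentwise, i.e.\ ``for each \(W\), almost surely''. When it is countably generated, your \(\pi\)-\(\lambda\) argument applies.
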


\begin{theorem}[{System morphisms out of the stochastic clock are behaviors}]\label{rt-000Q}
Fix a probability space \((\Omega , \mathcal {F}_\Omega , P)\) and a system \(f \colon  TX \to  A\). Then system morphisms into \(f\) out of the clock \(c \colon  T\tilde {
\Omega } \mathrel {\mkern 3mu\vcenter {\hbox {$\shortmid $}}\mkern -10mu{\to }} (\tilde {\Omega }, 1)\) as defined in \cref{rt-000P} are in canonical bijection with behaviors of \(f\) as defined in \cref{rt-000N}.
\end{theorem}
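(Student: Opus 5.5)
We follow the proof of \cref{rt-000J}: unfold a system morphism out of the clock \(c\) into its components, and show that each commuting square is one of the defining conditions of a behavior. A system morphism from \(c \colon T\tilde{\Omega} \mathrel{\mkern 3mu\vcenter{\hbox{$\shortmid$}}\mkern -10mu{\to}} (\tilde{\Omega},1)\) to \(f \colon TX \mathrel{\mkern 3mu\vcenter{\hbox{$\shortmid$}}\mkern -10mu{\to}} A\) consists of three pieces of data:
\begin{itemize}
\item a measurable map \(x \colon \tilde{\Omega} \to X\);
\item a measurable map \(a^+ \colon \tilde{\Omega} \to A^+\);
\item a measurable map \(a^- \colon \tilde{\Omega} \times 1 \cong \tilde{\Omega} \to A^-\).
\end{itemize}
The last one is a morphism in \(\mathsf{Ctx}_{\tilde{\Omega}}\), and it is deterministic, since charts are not stochastic in \cref{rt-000H}. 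By \cref{rt-000S}, each of these maps is exactly an \(\mathbb{F}\)-adapted process. Writing \(x_i = x(i,-)\), \(a_i^+ = a^+(i,-)\) and \(a_i^- = a^-(i,-)\), we get a bijection between the underlying data of system morphisms and triples of adapted processes. It then remains to match the two commutativity conditions with the two equations of \cref{rt-000N}.

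The output square says \(f \circ x = a^+ \circ c = a^+\). Evaluated on the \(i\)-th summand, this is precisely \(a_i^+(\omega) = f(x_i(\omega))\), pointwise, for every \(i\) and \(\omega\).

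The update square is an equation between stochastic kernels \(\tilde{\Omega} \to \Delta X\):
\begin{equation}
(\Delta x)\bigl(c^\sharp((i,\omega),\ast)\bigr) = f^\sharp\bigl(x_i(\omega), a_i^-(\omega)\bigr).
\end{equation}
By the definition of \(c^\sharp\) in \cref{rt-000P}, the left-hand side is \(\Delta(x \circ \iota_{i+1})\bigl(P(1_\Omega|\mathcal{F}_i)(\omega)\bigr)\), where \(\iota_{i+1}(\omega) = (i+1,\omega)\), so \(x \circ \iota_{i+1} = x_{i+1}\). Applying \cref{ocl-00CW} with \(f = x_{i+1}\) and the random variable \(1_\Omega\) rewrites this as \(P(x_{i+1}|\mathcal{F}_i)(\omega)\). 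Here \(x_{i+1}\) is measurable from \(\mathcal{F}_{i+1} \subseteq \mathcal{F}_\Omega\), which is all the lemma needs. The square therefore becomes
\begin{equation}
P(x_{i+1}|\mathcal{F}_i)(\omega) = f^\sharp\bigl(x_i(\omega), a_i^-(\omega)\bigr),
\end{equation}
which is the second behavior condition.

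The main obstacle is the quantifier over \(\omega\). The commutative square in \(\mathsf{Meas}\) asserts this equality for \emph{every} \(\omega\), whereas the behavior condition of \cref{rt-000N} is stated almost everywhere, because conditional probabilities are only defined up to null sets. To get a genuine bijection we handle this as follows:
\begin{itemize}
\item We read \(P(x_{i+1}|\mathcal{F}_i)\) in \cref{rt-000N} as the specific version \(\Delta(x_{i+1}) \circ P(1_\Omega|\mathcal{F}_i)\) induced by the fixed choice made in \cref{rt-000P}. This is a legitimate version by \cref{ocl-00CW}.
\item With this reading, the pointwise equation is exactly what the square says, so the correspondence is a bijection on the nose.
\end{itemize}
If instead one insists on the almost-everywhere reading, the argument shows that morphisms out of \(c\) are the behaviors satisfying the condition for this canonical version everywhere, and every a.e.\ behavior agrees with such a morphism up to null sets. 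I would state this caveat explicitly.

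Finally, canonicity of the bijection is naturality in \(f\). This holds because post-composing with a system morphism acts componentwise on \(x\), \(a^+\) and \(a^-\), in exactly the way the functor \(B_{(\Omega,\mathcal{F}_\Omega,P,\mathbb{F})}\) acts on behaviors.
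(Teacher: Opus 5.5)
Your main argument is the paper's proof. You unfold the morphism into adapted processes via \cref{rt-000S}, read the output condition off the first square, and identify the upper-right path of the update square with \(P(x_{i+1}\mid\mathcal{F}_i)(\omega)\) using the definition of \(c^\sharp\) and \cref{ocl-00CW}. The paper also silently reads \(P(x_{i+1}\mid\mathcal{F}_i)\) as the version \(\Delta(x_{i+1})\circ P(1_\Omega\mid\mathcal{F}_i)\) fixed in \cref{rt-000P} when it writes \((\Delta x)(c^\sharp(i,\omega)) = P(x_{i+1}\mid\mathcal{F}_i)(\omega)\) pointwise, so under that reading your bijection is correct, and your explicit statement of it is an improvement.

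The one thing to withdraw is the fallback claim that, under the almost-everywhere reading, every behavior agrees up to null sets with a morphism out of \(c\). Your argument does not show this, and it is false for an arbitrary fixed version of the conditional kernel. Here is a counterexample.

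Take \(\Omega=[0,1]\) with Lebesgue measure \(\lambda\), and let \(\mathcal{F}_i=\mathcal{F}_\Omega\) be the Borel sets for all \(i\). Choose the legitimate version \(P(1_\Omega\mid\mathcal{F}_i)(\omega)=\delta_\omega\) for \(\omega\neq 0\) and \(P(1_\Omega\mid\mathcal{F}_i)(0)=\lambda\). Let \(f\) have \(X=A^+=[0,1]\), \(A^-=1\), \(f=\mathrm{id}\) and \(f^\sharp(x,\ast)=\delta_x\).

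Then \(x_i=a^+_i=\mathrm{id}\), \(a^-_i=\ast\) is a behavior in the a.e.\ sense. However, for any morphism \((x',a')\) out of this clock, the update square at the point \((i,0)\) says \(\Delta(x'_{i+1})(\lambda)=\delta_{x'_i(0)}\). This forces \(x'_{i+1}\) to be \(\lambda\)-a.e.\ constant, so no morphism agrees with \(x_{i+1}=\mathrm{id}\) almost everywhere.

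So under the a.e.\ reading you only get that morphisms yield behaviors, not the converse up to null sets. The converse would need extra hypotheses on the chosen kernels; simply drop that sentence rather than asserting it as a consequence of your argument.
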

\begin{proof}
A system morphism from the clock \(c\) into \(f\) consists of a pair \(x \colon  \tilde {\Omega } \to  X, a \colon  (\tilde {\Omega }, 1) \to  A\). By \cref{rt-000S}, this is in bijection with a choice of processes \(x_i \colon  \Omega  \to  X\), \(a^+_i \colon  \Omega  \to  A^+\), \(a^-_i \colon  \Omega  \to  A^-\) adapted to \(\mathbb {F}\).

The first criterion for such processes to form a behavior of \(F\), that \(a^+_i(\omega ) = f(x_i(\omega ))\) is trivially derived from the commutativity of
\begin{center}
\begin {tikzcd}
	{\tilde {\Omega }} & {\tilde {\Omega }} \\
	X & {A^+}
	\arrow [equals, from=1-1, to=1-2]
	\arrow ["x"', from=1-1, to=2-1]
	\arrow ["a", from=1-2, to=2-2]
	\arrow ["f"', from=2-1, to=2-2]
\end {tikzcd}
\end{center}
The second criterion is trickier, as it involves conditional probability. We claim that the commutativity of
\begin{center}
\begin {tikzcd}
	{\tilde {\Omega } \times  1} & {\Delta  \tilde {\Omega }} \\
	{X\times  A^-} & \Delta  X
	\arrow ["{c^\sharp }", from=1-1, to=1-2]
	\arrow ["{x \times  a^\flat }"', from=1-1, to=2-1]
	\arrow ["\Delta  x", from=1-2, to=2-2]
	\arrow ["{f^\sharp }"', from=2-1, to=2-2]
\end {tikzcd}
\end{center}
is equivalent to satisfying the equation
\begin{equation}P(x_{i+1}|\mathcal {F}_i)(\omega ) = f^\sharp (x_i(\omega ), a_i^-(\omega )).\end{equation}
It is clear that the right hand side of that equation corresponds to the lower-left path in the commutative square; but it is not so clear that the left hand side of that equation corresponds to the upper-right path; we must show that
\begin{equation}(\Delta  x) (c^\sharp (i, \omega )) = P(x_{i+1}|\mathcal {F}_i)(\omega )\end{equation}
Expanding out the definition of \(c^\sharp \), we have
\begin{equation}(\Delta  x) (c^\sharp (i, \omega )) = (\Delta  x) (i+1, P(1_\Omega |\mathcal {F}_i)(\omega ))\end{equation}
Then using that \(x(i+1, \omega ) = x_{i+1}(\omega )\), and \cref{ocl-00CW}, the right hand side of this is equal to \(P(x_{i+1}| \mathcal{F}_i)(\omega)\), as required. We are done.
\end{proof}

\begin{remark}[{An alternative clock}]\label{rt-0013}
We have chosen the clock system \(\tilde {\Omega }\) so that we can make a precise connection to the classical theory in \cref{rt-000Q}. However, if we don't care so much about recovering precisely the classical definition, we could choose another clock system with state space \(\Omega ^\ast  = \sum _{n \in  \mathbb {N}} \Omega ^n\) for some fixed probability \(\Omega \).

The update for this clock system takes a list \((\omega _1,\ldots ,\omega _n)\) and appends a randomly sampled element of \(\Omega \) to form \((\omega _1,\ldots ,\omega _n,\omega _{n+1})\).

This is almost the same as the clock system with state space \(\sum _{n : \mathbb {N}} (\Omega ^\mathbb {N}, \mathcal {F}_n)\) where the \(\sigma \)-algebra \(\mathcal {F}_n\) is induced by the projection to the first \(n\) elements. However, technically speaking there is no isomorphism between \((\Omega ^\mathbb {N}, \mathcal {F}_n)\) and \(\Omega ^n\) in the category of measure spaces, even though they are isomorphic as locales.

The advantage of the clock system \(\Omega ^\ast \) is that it is somewhat easier to describe, and additionally it is closer to how a computer implementation would work (at each step, sample a new “seed”).

In \cref{rt-000X} we use a similar idea to this in order to get behaviors for nondeterministic systems.
\end{remark}
\section{Representable behaviors for discrete-time nondeterministic Moore machines}\label{rt-000V}
To our knowledge, there is not a well-established classical definition of behavior for discrete-time nondeterministic Moore machines.

However, a similar kind of clock can work in the nondeterministic setting as in the stochastic setting, and this provides some notion of behavior; we will leave it to the reader to ascertain if it is a useful notion.

\begin{definition}[{Nondeterministic linear clock system}]\label{rt-000X}
Let \(\Omega \) be a set, and define the following nondeterministic system on the interface \((\Omega ^\ast , 1)\). The state space is \(\Omega ^\ast \), and \(c \colon  T\Omega ^\ast  \mathrel {\mkern 3mu\vcenter {\hbox {$\shortmid $}}\mkern -10mu{\to }} (\Omega ^\ast , 1)\) is defined by
\begin{equation}c(w) = w\end{equation}\begin{equation}c^\sharp (w, \ast ) = \{[w\ldots , \omega ] \mid  \omega  \in  \Omega \}\end{equation}
Intuitively, the update function nondeterministically chooses a new \(\omega \) to add to the end of the list.
\end{definition}

As we do not know of a well-established classical definition of behavior, the equivalent of \cref{rt-000Q} would just be a definitional equality; we define a behavior to be a map out of a nondeterministic clock system.

In \cite{wang-2025-nondeterministic}, Wang considers not just linear nondeterministic behaviors, but also non-linear nondeterministic behaviors. Inspired by this we propose another kind of clock system parametrized by a graph.

\begin{definition}[{Nondeterministic nonlinear clock systems}]\label{rt-000Y}
Let \(G\) be a graph and \(\Omega \) a set. Then we may form a set \(\mathbf {Path}_\Omega (G)\) of \(\Omega \)-decorated paths in \(G\). Concretely, an element of \(\mathbf {Path}_\Omega (G)\) is a path in \(G\) with an element of \(\Omega \) for every edge in the path.

There is then a system \(c_{\Omega , G}\) on the interface \((\mathbf {Path}_\Omega (G), 1)\) with state space \(\mathbf {Path}_\Omega (G)\) defined by:
\begin{equation}c_{\Omega , G}(p) = p\end{equation}\begin{equation}c_{\Omega , G}^\sharp (p, \ast ) = \{[p\ldots ,(e,\omega )] \mid  \text {$e$ is an edge starting at the end of $p$, $\omega  \in  \Omega $}\}\end{equation}
Note that in the case that \(G\) is the graph with a single node and a single edge, we recover the clock system of \cref{rt-000X}.
\end{definition}
\section{Profiting from representable behaviors}\label{rt-0014}
The fact that behavior is representable is not just nice in the abstract, it means that several theorems about behavior become much easier to prove.
\subsection{Compositionality of behavior}\label{rt-0015}
In \cite{myers-2021-double} (and later in \cite{myers-2023-categorical}), Myers shows that given a clock system in any systems theory, the induced representable behavior functor satisfies a certain compositionality theorem. In this section, we simply explain what that means in the context of our stochastic clock system of \cref{rt-000P}.

We lift our behaviors from a functor \(\mathsf {Sys} \to  \mathsf {Set}\) to a morphism of systems theories from the systems theory of discrete-time stochastic Moore machines to the behavioral (or “Jan Willems-style”, see \cite{willems-2007-behaviour}) systems theory. We do not delve into the double category theory here, instead we simply explain how the behavior functor extends to the rest of the systems theory (interfaces and composition patterns) in concrete terms.

To recap, the behavioral systems theory is characterized by the following data.
\begin{itemize}\item{}An interface is a set \(A\).
\item{}An interface morphism is a function.
\item{}A composition pattern from \(A\) to \(B\) is a span:

  \begin{center}
\begin {tikzcd}
	& E \\
	A && B
	\arrow [from=1-2, to=2-1]
	\arrow [from=1-2, to=2-3]
\end {tikzcd}
\end{center}
\item{}A composition pattern morphism is a span morphism

  \begin{center}
\begin {tikzcd}
	{A_1} & {E_1} & {B_1} \\
	{A_2} & {E_2} & {B_2}
	\arrow [from=1-1, to=2-1]
	\arrow [from=1-2, to=1-1]
	\arrow [from=1-2, to=1-3]
	\arrow [from=1-2, to=2-2]
	\arrow [from=1-3, to=2-3]
	\arrow [from=2-2, to=2-1]
	\arrow [from=2-2, to=2-3]
\end {tikzcd}
\end{center}
\item{}A system on an interface \(A\) is a set \(X\) with a function \(X \to  A\), or in other words a span from \(1\) to \(A\).
\item{}A morphism of systems from \(X_1 \to  A_1\) to \(X_2 \to  A_2\) is a commutative square

  \begin{center}
\begin {tikzcd}
	{X_1} & {A_1} \\
	{X_2} & {A_2}
	\arrow [from=1-1, to=1-2]
	\arrow [from=1-1, to=2-1]
	\arrow [from=1-2, to=2-2]
	\arrow [from=2-1, to=2-2]
\end {tikzcd}
\end{center}\end{itemize}
Given a fixed probability space \((\Omega , \mathcal {F}_\Omega , P)\), morphisms out of the clock system \(T\tilde {\Omega } \to  (\tilde {\Omega }, 1)\) assemble into a systems theory map in the following way.
\begin{itemize}\item{}We send an interface \(A = (A^+, A^-)\) to the set of charts from \((\tilde {\Omega }, 1)\) to \(A\), \(\mathsf {Chart}((\tilde {\Omega }, 1), A)\).
\item{}An interface morphism \(a \colon  A_1 \to  A_2\) is sent to the post-composition function \(a \circ  - \colon  \mathsf {Chart}((\tilde {\Omega }, 1), A_1) \to  \mathsf {Chart}((\tilde {\Omega }, 1), A_2)\)
\item{}A composition pattern \(f \colon  A \mathrel {\mkern 3mu\vcenter {\hbox {$\shortmid $}}\mkern -10mu{\to }} B\) is sent to the relation (recall that relations are a subclass of spans) between \(\mathsf {Chart}((\tilde {\Omega }, 1), A)\) and \(\mathsf {Chart}((\tilde {\Omega }, 1), B)\) where \(a \colon  (\tilde {\Omega }, 1) \to  A\) and \(b \colon  (\tilde {\Omega }, 1) \to B\) are related if and only if the following commutes
  \begin{center}
\begin {tikzcd}
	{(\tilde {\Omega },1)} & {(\tilde {\Omega }, 1)} \\
	A & B
	\arrow [equals, from=1-1, to=1-2]
	\arrow ["a"', from=1-1, to=2-1]
	\arrow ["b", from=1-2, to=2-2]
	\arrow ["f"', "\shortmid "{marking}, from=2-1, to=2-2]
\end {tikzcd}
\end{center}This can be thought of as “the compatibility relation between behaviors on the interface.”
\item{}A morphism of composition patterns is sent to the span morphism given by postcomposition, just like a morphism of interfaces
\item{}A system \(f \colon  TX \to  A\) is sent to the set of system morphisms from \(c \colon  T\tilde {\Omega } \to  (\tilde {\Omega }, 1)\) to \(f\), with corresponding restriction map into \(\mathsf {Chart}((\tilde {\Omega }, 1), A)\). That is, a behavior of a system restricts to a behavior on the interface.
\item{}A system morphism is sent again to the action of postcomposition, just like a morphism of composition patterns\end{itemize}
The upshot of all of this is that as soon as we have defined behaviors representably, we automatically get a natural interaction between behaviors of systems and behaviors on their interfaces, and in addition a relationship (which we do not have space to discuss in depth here) between the action of composition patterns on systems and systems behaviors (such a relationship is often termed a “compositionality theorem for behaviors”). The reader is encouraged to refer to section 6 in \cite{myers-2021-double} for more details.
\subsection{Sheaves of behaviors}\label{rt-0016}
In \cref{rt-000A} we characterized behavior in terms of a fixed filtered probability space \((\Omega , \mathcal {F}, P)\). This is inelegant because there is of course not a canonical \(\Omega \); we would like to be able to vary \(\Omega \). In this section we show how to do this.

In \cite{simpson-2017-probability}, Simpson gives a notion of “probability sheaf”, which is a sheaf on the following site.

\begin{definition}[{Site of sample spaces}]\label{rt-0017}

Let \(\mathbb {P}\) be the category where the objects are pairs \((X, P)\) of a standard Borel space \(X\) equipped with a probability measure \(P\), and the morphisms are measurable maps that preserve the probability measure.

Then we can equip \(\mathbb {P}\) with the atomic topology, in which the covering sieves are simply the non-empty sieves; the resulting site is what we call the site of sample spaces.
\end{definition}

\begin{proposition}[{Random variables form a sheaf}]\label{rt-0018}
Given a Polish space \(A\), then the functor
\begin{equation}\underline {\mathrm {RV}}(A)(\Omega ) = \{x \colon  \Omega  \to  A \mid  \text {$x$ is Borel measurable}\}/=_{\text {a.e.}}\end{equation}
sending a sample space to the set of \(A\)-valued random variables quotiented by almost-everywhere equality is a sheaf on the site of sample spaces. This is proved in \cite{simpson-2017-probability}
\end{proposition}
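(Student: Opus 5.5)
The plan is to check the presheaf structure and then verify the sheaf condition for the atomic topology directly. For the atomic topology that condition says: for every morphism \(f \colon (\Omega',P') \to (\Omega,P)\) in \(\mathbb{P}\), every \(f\)-compatible element of \(\underline{\mathrm{RV}}(A)(\Omega')\) descends uniquely to \(\Omega\). Here \(x'\) is \(f\)-compatible when \(x' \circ g_1 = x' \circ g_2\) a.e. for every pair \(g_1,g_2 \colon \Omega'' \to \Omega'\) with \(f g_1 = f g_2\).

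First, functoriality. A morphism \(f\) acts by precomposition \(x \mapsto x \circ f\). This is well defined on a.e.-classes because \(f\) preserves measure, so \(f^{-1}\) of a null set is null. Throughout I use that \(A\) is Polish, hence standard Borel, so the diagonal \(\{(a,b) : a=b\}\) is Borel. Consequently \(\{x \neq y\}\) is measurable for random variables \(x,y\).

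Uniqueness (separation) comes next. Suppose \(x \circ f = y \circ f\) a.e. Then \(f^{-1}\{x\neq y\}\) is \(P'\)-null. Since \(P'(f^{-1}U) = P(U)\), the set \(\{x \neq y\}\) is \(P\)-null.

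Existence is the real content. Since \(\Omega,\Omega'\) are standard Borel, I disintegrate \(P'\) along \(f\). This gives a measurable kernel \(\omega \mapsto P'_\omega \in \Delta\Omega'\) with \(P'_\omega\) concentrated on \(f^{-1}(\omega)\) for \(P\)-a.e. \(\omega\), and \(P' = \int P'_\omega \, \mathrm{d}P(\omega)\). I then form the relative product \(\Omega'' = \Omega' \times_\Omega \Omega'\). This is a Borel subset of \(\Omega'\times\Omega'\), hence standard Borel. I equip it with \(\int P'_\omega \otimes P'_\omega \, \mathrm{d}P(\omega)\), so that both projections \(g_1,g_2\) are measure-preserving and satisfy \(f g_1 = f g_2\). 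Compatibility of \(x'\) then gives
\begin{equation}(P'_\omega \otimes P'_\omega)\{(u,v) : x'(u)\neq x'(v)\} = 0 \quad\text{for $P$-a.e. } \omega .\end{equation}
Now set \(\mu_\omega = (\Delta x')(P'_\omega) \in \Delta A\). The display says \(\mu_\omega \otimes \mu_\omega\) is concentrated on the diagonal. Taking a countable separating family of Borel sets \(U\), we get \(\mu_\omega(U)^2 + \mu_\omega(U^c)^2 = 1\), so \(\mu_\omega(U) \in \{0,1\}\) for each \(U\). Hence \(\mu_\omega\) is a Dirac measure \(\delta_{x(\omega)}\).

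Measurability of \(x\) follows because \(\omega \mapsto \mu_\omega\) is measurable, being the pushforward of a measurable kernel. Moreover the Dirac embedding \(A \to \Delta A\) is a Borel isomorphism onto its (Borel) image, so composing with its inverse shows \(x\) is measurable; on the null set where \(\mu_\omega\) is not Dirac, I define \(x\) arbitrarily. Finally, \(x \circ f = x'\) \(P'\)-a.e., because
\begin{equation}P'\{x\circ f \neq x'\} = \int P'_\omega\{u : x(\omega) \neq x'(u)\}\,\mathrm{d}P(\omega) = \int \mu_\omega(A\setminus\{x(\omega)\})\,\mathrm{d}P = 0 .\end{equation}

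I expect the main obstacle to be the existence step. It has two delicate points: setting up the disintegration and the relative-product object \(\Omega''\) so that it genuinely lies in \(\mathbb{P}\), and the measurability of the descended map \(x\). Both rely on standard Borel and Polish hypotheses, which is exactly why the site is restricted to standard Borel spaces and \(A\) to Polish spaces.
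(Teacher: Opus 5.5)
Your proof is correct and takes essentially the same route as the paper. The paper only cites Simpson for this proposition, but the gluing argument it gives in the proof of Theorem~\ref{rt-PRZ6} is the same as yours: form the independent pullback $\Omega'\otimes_\Omega\Omega'$, use compatibility to get $x'(u)=x'(v)$ almost surely there, conclude that the conditional law of $x'$ given $\omega$ is a Dirac measure, and obtain measurability of the descended map from measurability of the disintegration kernel. Your version fills in details the paper leaves implicit: the $\{0,1\}$-valuedness argument, the Borel-ness of the Dirac image, and the final check that $x\circ f = x'$ almost surely.
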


	We will introduce a slight modification of this category, replacing probability spaces with filtered probability spaces.

\begin{definition}[{The category of filtrations}]\label{rt-1LAB}
  Let \(\mathbb {P}^\mathbb {N}_\mathrm {fil}\) denote the category where
  \begin{enumerate}\item{}Objects are standard Borel probability spaces \((\Omega ,\mathcal {F},P)\) equipped with a filtration \(\mathcal {F}_1 \subseteq  \mathcal {F}_2 \subseteq  \dots \) of \(\mathcal {F}\).
    \item{}Morphisms are measure-preserving, measurable functions \(f\) which preserve the filtration.\end{enumerate}
  Following \cite{simpson-2017-probability}, equip this with the coverage where every morphism is a singleton covering family.

\end{definition}

	We require that the ``full'' \(\sigma \)-algebra \(\mathcal {F}\) makes \((\Omega , \mathcal {F})\) a standard Borel space, but not that the filtration degrees \((\Omega , \mathcal {F}_i)\) are standard Borel (this is usually not possible). However, this hypothesis is enough to construct the conditional distributions we need, since we can always form the conditional \((\Omega , \mathcal {F}_i) \to  (\Omega , \mathcal {F})\) and then restrict down to the relevant filtration degree.

	Note that the assignment \((\Omega , \mathcal {F}, P) \mapsto  (\Omega ^\mathbb {N}, \mathcal {F}^\mathbb {N}, P^\mathbb {N}, \mathcal {F}^{\{0,\dots  i-1\}})\), which carries a probability space to its \(\mathbb {N}\)-fold product with itself (equipped with the natural filtration, where the \(i\)th filtration degree is the \(\sigma \)-algebra generated by the first \(i\) coordinates) is a functor \(\mathbb {P} \to  \mathbb {P}_\mathrm {fil}^\mathbb {N}\).

	We will need to restrict our attention to a certain subset of morphisms in \(\mathbb {P}_\mathrm {fil}^\mathbb {N}\), namely those which preserve the conditional expectations of stochastic processes, in the following sense:

\begin{definition}[{Immersion of filtrations}]\label{rt-MQP8}
  Let \(\phi : (\Omega , \mathcal {F}_i) \to  (\Gamma , \mathcal {F}'_i)\) be a measurable map between filtered probability spaces, compatible with the filtration. Then precomposition with \(\phi \) carries an adapted stochastic process on \(\Gamma \) to one on \(\Omega \). If this operation preserves martingales, we call \(\phi \) an \emph{immersion of filtrations} (or simply an immersion).

\end{definition}

\begin{proposition}[{Characterization of immersions}]\label{rt-41MK}
  Let \(\phi : (\Omega , \mathcal {F}, P \mathcal {F}_i) \to  (\Gamma , \mathcal {F}', P', \mathcal {F}'_i)\) be a map of filtered probability spaces. Suppose the conditionals distributions \((\Omega , \mathcal {F}_i) \to  (\Omega , \mathcal {F}_{i+1})\), \((\Gamma , \mathcal {F}_i) \to  (\Gamma , \mathcal {F}_{i+1})\) all exist (for example if \((\Omega , \mathcal {F}), (\Gamma , \mathcal {F}')\) are standard Borel). Then the following are equivalent:

\begin{enumerate}\item{}\(\phi \) is an immersion, that is for every adapted process \(X_i: \Gamma  \to  \mathbb {R}\), if \(X_i\) is a martingale, then so is \(X_i\phi  : \Omega  \to  \mathcal {F}_i\).
  
  \item{}
    The following diagram in \(\mathsf {Stoch}\) commutes for each \(i\), where the horizontal maps are the respective conditional distribution kernels.
  \end{enumerate}\begin{center}
  \begin {tikzcd}
	{(\Omega , \mathcal {F}_i)} & {(\Omega , \mathcal {F}_{i+1})} \\
	{(\Gamma , \mathcal {F}'_i)} & {(\Gamma ,\mathcal {F}'_{i+1})}
	\arrow ["c", from=1-1, to=1-2]
	\arrow ["\phi ", from=1-1, to=2-1]
	\arrow ["\phi ", from=1-2, to=2-2]
	\arrow ["c", from=2-1, to=2-2]
\end {tikzcd}
\end{center}
\end{proposition}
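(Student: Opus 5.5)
The plan is to translate both conditions into a statement about conditional expectations and then pass between them one time step at a time. Write $c^\Omega_i$ for the conditional distribution kernel $(\Omega,\mathcal{F}_i)\to(\Omega,\mathcal{F}_{i+1})$. This is a chosen version of $P(1_\Omega\mid\mathcal{F}_i)$, as in \cref{rt-000P}, restricted to $\mathcal{F}_{i+1}$. Write $c^\Gamma_i$ for the analogous kernel on $\Gamma$. Equality of morphisms in $\mathsf{Stoch}$ is taken up to almost-sure equality with respect to $P$ on the source.

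Evaluated on $B\in\mathcal{F}'_{i+1}$, the square says
\begin{equation}P(\phi^{-1}B\mid\mathcal{F}_i)(\omega) = P'(B\mid\mathcal{F}'_i)(\phi(\omega))\quad\text{$P$-a.e.}\end{equation}
By linearity and monotone convergence, this is equivalent to
\begin{equation}\mathbb{E}_P(Y\circ\phi\mid\mathcal{F}_i)=\mathbb{E}_{P'}(Y\mid\mathcal{F}'_i)\circ\phi\quad\text{a.e.}\end{equation}
for every bounded (or integrable) $\mathcal{F}'_{i+1}$-measurable $Y$. Here we use that $\phi$ is measure preserving, so integrability and null sets transfer along $\phi$. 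The right-hand side is $\mathcal{F}_i$-measurable because $\phi$ preserves the filtration.

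For (2)$\Rightarrow$(1), let $X_i$ be a martingale on $\Gamma$. The process $X_i\circ\phi$ is adapted, because $\phi$ respects the filtrations, and it is integrable, because $\phi$ preserves measure. Since the martingale property is a one-step condition, it suffices to compute $\mathbb{E}_P(X_{i+1}\circ\phi\mid\mathcal{F}_i)(\omega)$. Write this as the integral of $X_{i+1}\circ\phi$ against $c^\Omega_i(\omega)$. This equals the integral of $X_{i+1}$ against the pushforward $(\Delta\phi)c^\Omega_i(\omega)$, in the spirit of \cref{ocl-00CW}. By the commuting square, the pushforward is $c^\Gamma_i(\phi(\omega))$. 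The integral is then $\mathbb{E}_{P'}(X_{i+1}\mid\mathcal{F}'_i)(\phi(\omega)) = X_i(\phi(\omega))$ almost everywhere, which is the martingale property for $X_i\circ\phi$.

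For (1)$\Rightarrow$(2), fix $i$ and $B\in\mathcal{F}'_{i+1}$. Form the closed bounded martingale $Y_j=\mathbb{E}_{P'}(\chi_B\mid\mathcal{F}'_j)$, noting that $Y_j=\chi_B$ for $j\ge i+1$. By (1), $Y_j\circ\phi$ is a martingale on $\Omega$. In particular,
\begin{equation}\mathbb{E}_P(\chi_B\circ\phi\mid\mathcal{F}_i)=\mathbb{E}_P(Y_{i+1}\circ\phi\mid\mathcal{F}_i)=Y_i\circ\phi,\end{equation}
which is exactly the displayed setwise identity for the square.

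The main obstacle I expect is the null-set bookkeeping in this last step, because the exceptional set depends on $B$. If equality in $\mathsf{Stoch}$ means setwise almost-everywhere equality, as in Simpson's framework, we are done. If it means almost-sure equality of the kernels as maps into $\Delta$, I would first try to reduce to a countable generating $\pi$-system of $\mathcal{F}'_{i+1}$, take a countable union of null sets, and extend by a $\pi$–$\lambda$ argument. If $\mathcal{F}'_{i+1}$ is not countably generated, I would instead compare both kernels as kernels into the standard Borel $(\Gamma,\mathcal{F}')$. Both exist by hypothesis: one is $\phi$ composed with $c^\Omega$ on $\mathcal{F}$, the other is the full conditional of $\Gamma$. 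I would check the identity on a countable generator of $\mathcal{F}'$, using the martingales $\mathbb{E}_{P'}(\chi_B\mid\mathcal{F}'_j)$ for $B\in\mathcal{F}'$. This requires (1) to be applied to martingales that are closed only at infinity, which may need a limiting argument.
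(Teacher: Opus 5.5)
Your proposal is correct and follows the same route as the paper. For (1)$\Rightarrow$(2) you use the closed martingale $\mathbb{E}_{P'}(\chi_B\mid\mathcal{F}'_j)$ with $B\in\mathcal{F}'_{i+1}$. For (2)$\Rightarrow$(1) you write the one-step conditional expectation as an integral against the conditional kernel and transport it along $\phi$ using the commuting square.

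Your null-set bookkeeping is more careful than the paper's ``unwinding the definitions''. Almost-sure equality in $\mathsf{Stoch}$ is standardly the setwise notion, so your main argument already suffices and the fallback in your last paragraph is not needed.
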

\begin{proof}
   First assume \(\phi \) preserves martingales. Let \(V \in  \mathcal {F}'_{i+1}\). Consider the stochastic process on \(\Gamma \) \(X_n := \mathbb {E}[1_V \mid  \mathcal {F}'_n] = P(V \mid  F'_n)\). By the tower property of conditional expectation, this is a martingale. Hence \(P(\phi (\omega ) \in  V \mid  \mathcal {F}_n)\) is a martingale on \(\Omega \). But after unwinding the definitions, this is exactly the claim that the above diagram commutes.

    Now assume this diagram commutes for each \(i\), and let \(X_n\) be a martingale on \(\Gamma \). By induction it suffices to prove that \(\mathbb {E}[X_{i+1}(\phi (\omega )) \mid  \mathcal {F}_i] = X_{i}(\phi (\omega ))\) (almost surely). Since \(X\) is a martingale, we can rewrite the right-hand side as \(\mathbb {E}[X_{i+1}(-) \mid  \mathcal {F}'_i] (\phi (\omega ))\). Now using the fact that conditional expectation is equal to the expectation of the conditional distribution, this is equal to the left-hand side using commutativity of the diagram.
\end{proof}

	The term \emph{immersion} is from the literature on stochastic processes (see e.g. \cite{grigorian-2023-enlargement}), where it is used to refer to the preceding condition in the case where \(\phi \) is the identity map between two filtrations of \(\sigma \)-algebras on the same set. Note that in the general case the conditional distributions involved in the second condition may not exist, which is why the first condition is used.

\begin{definition}[{The category of immersions}]
		Let \(\mathbb {P}^\mathbb {N}_\mathrm {im} \subseteq  \mathbb {P}^\mathbb {N}_\mathrm {fil}\) be the subcategory with the same objects, but containing only the immersions as morphisms. We equip this with (the restriction of) the same coverage as \(\mathbb {P}^\mathbb {N}_\mathrm {im}\)
\end{definition}

We can then use some abstract nonsense to make behaviors form a presheaf on \(\mathbb {P}^\mathbb {N}_\mathrm {im}\). Specifically, there is a functor \(c_{-}\) from \(\mathbb {P}^\mathbb {N}_\mathrm {im}\) to \(\mathsf {Sys}\) induced by the fact that, given a measure-preserving, filtration-preserving map of filtered probability spaces \(\Omega _1 \to  \Omega _2\), there is an induced map \(\tilde {\Omega _1} \to  \tilde {\Omega _2}\). If \(\phi \) is an immersion, then this diagram commutes:
\begin{center}
\begin {tikzcd}
	{T\Omega _1^\ast } & {(\Omega _1^\ast ,1)} \\
	{T\Omega _2^\ast } & {(\Omega _2^\ast ,1)}
	\arrow ["{c_{\Omega _1}}", "\shortmid "{marking}, from=1-1, to=1-2]
	\arrow [from=1-1, to=2-1]
	\arrow [from=1-2, to=2-2]
	\arrow ["{c_{\Omega _2}}"', "\shortmid "{marking}, from=2-1, to=2-2]
\end {tikzcd}
\end{center}
Thus, we can take the nerve of \(c_{-}\) to get a functor \(B \colon  \mathsf {Sys} \to \mathsf {Psh}(\mathbb {P}_\mathrm {im}^\mathbb {N})\), defined by
\begin{equation}B(f)(\Omega ) = \mathsf {Sys}(c_\Omega , f)\end{equation}
This is not quite a sheaf, since it does not respect almost sure equality. However, the sheafification of this presheaf is exactly given by quotienting by almost sure equality.

The following theorem relies on lemmas found in Appendix~\ref{supp-lemmas}.

\begin{theorem}[{Behaviors of stochastic system form a probability sheaf}]\label{rt-PRZ6}
The following hold.
\begin{enumerate}\item{}
    There is a functor \(c_{-}: \mathbb {P}^\mathbb {N}_\mathrm {im} \to  \mathsf {Sys}\) which carries a filtered probability space \((\Omega ,\mathcal {F},P)\) to the system \(T\tilde {\Omega } \xrightarrow {c_\Omega } (\tilde {\Omega }, 1)\) defined above.
  
  \item{}
    The nerve of \(c_{-}\) is a functor \(B: \mathsf {Sys} \to  \mathsf {Psh}(\mathbb {P}^\mathbb {N}_\mathrm {im})\), where \(B(s)(\Omega ,F,P)\) is the set of \(\Omega \)-parameterized stochastic processes which satisfy the law of \(s\).
  
  \item{}
        The sheafification of this presheaf is given by quotienting each set \(B(s)(\Omega ,F,P)\) by \(P\)-almost sure equivalence. Denoting this by \(\bar {B}(s)(\Omega ,F,P)\), \(\bar {B}\) forms a functor \(\mathsf {Sys} \to  \mathsf {Sh}(\mathbb {P}^\mathbb {N}_\mathrm {im})\)\end{enumerate}
\end{theorem}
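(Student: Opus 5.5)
The plan is to prove the three parts in order, each building on the previous one.

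\emph{Part 1: functoriality of the clock.} For an immersion \(\phi \colon \Omega_1 \to \Omega_2\), I will define \(\tilde\phi \colon \tilde\Omega_1 \to \tilde\Omega_2\) by \((i,\omega) \mapsto (i,\phi(\omega))\). This map is measurable on each summand precisely because \(\phi\) preserves the filtration, i.e.\ \(\phi^{-1}\mathcal{F}'_i \subseteq \mathcal{F}_i\). The chart component is \((\tilde\phi, !)\). The output square commutes trivially, since both outputs are identities.

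The update square asks that \((\Delta\tilde\phi)(i+1, P(1_{\Omega_1}\mid\mathcal{F}_i)(\omega)) = (i+1, P(1_{\Omega_2}\mid\mathcal{F}'_i)(\phi\omega))\) as measures on \((\Omega_2,\mathcal{F}'_{i+1})\). This is exactly the commuting square of \cref{rt-41MK}(2). That square needs the conditional kernels into the \((i+1)\)-st degree. These are obtained by restricting the full conditional \(P(1\mid\mathcal{F}_i)\), which exists by standard Borelness, as remarked before the definition of immersion.

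One subtlety is that the square only holds \(P_1\)-almost everywhere, not pointwise. To handle this I will either choose versions of the conditionals compatibly, or, more honestly, work with the clock defined up to null sets. In the latter case functoriality holds on the nose only after the quotient of part 3. I expect this to be the main obstacle, and would first try interpreting ``commutes'' almost surely, as the paper's treatment of conditional probability suggests. Preservation of identities and composites is immediate from the formula for \(\tilde\phi\).

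\emph{Part 2: the nerve.} Define \(B(s)(\Omega) = \mathsf{Sys}(c_\Omega, s)\), with restriction along \(\phi\) given by precomposition with \(c_\phi\). Functoriality in both variables then follows from associativity of composition in \(\mathsf{Sys}\). Applying \cref{rt-000Q} identifies this set with the adapted processes satisfying the law of \(s\). Under the bijection of \cref{rt-000S}, precomposition with \(\tilde\phi\) corresponds to \(x_i \mapsto x_i\circ\phi\).

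\emph{Part 3: sheafification.} The coverage consists of single morphisms \(\phi\colon\Omega'\to\Omega\), and the sheaf condition for such a coverage is the usual one. A matching family for \(\phi\) is a behavior \(y\) on \(\Omega'\) whose two pullbacks along any pair \(\psi_1,\psi_2\colon \Omega''\to\Omega'\) with \(\phi\psi_1=\phi\psi_2\) agree; we need a unique \(x\) on \(\Omega\) with \(x\circ\phi = y\).

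\textbf{Separatedness.} Suppose \(x\circ\phi = x'\circ\phi\) almost surely. Since \(\phi\) is measure-preserving, \(x = x'\) almost surely. Consequently the plus-construction identifies exactly the almost-surely equal sections.

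\textbf{Gluing.} Following Simpson's argument for \(\underline{\mathrm{RV}}\) (\cref{rt-0018}), I would use the appendix lemmas to descend each \(y_i\) to an \(\mathcal{F}_i\)-measurable \(x_i\) on \(\Omega\), up to null sets. The matching condition will be tested against the relative independent product \(\Omega'\times_\Omega\Omega'\), which must itself be shown to be filtered and equipped with immersions. Then \(x_i\) is obtained as a conditional expectation of \(y_i\).

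It remains to check that the descended process still satisfies the law of \(s\). This uses that \(\phi\) is an immersion: the conditional kernels on \(\Omega\) pull back to those on \(\Omega'\), so the law equation transfers, again via \cref{ocl-00CW}. Finally, \(\bar B\) is a functor into \(\mathsf{Sh}(\mathbb{P}^{\mathbb{N}}_{\mathrm{im}})\) because postcomposition with a system morphism preserves almost-sure equality.
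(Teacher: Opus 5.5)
Your route is the paper's: the square of \cref{rt-41MK} for part 1, \cref{rt-000Q} for part 2, and for part 3 separatedness via measure preservation plus Simpson-style descent tested on the independent pullback \(\Omega'\otimes_\Omega\Omega'\).

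The genuine gap is your assertion that each \(y_i\) descends to an \(\mathcal{F}_i\)-measurable \(x_i\). The independent-pullback argument only shows that the conditional law of \(y_i\) given \(\phi\) is a.s.\ a Dirac mass. So it produces an \(x_i\) measurable for the full \(\sigma\)-algebra \(\mathcal{F}\) of \(\Omega\), and nothing forces adaptedness. Your law-transfer step presupposes adaptedness, because the square of \cref{rt-41MK} only concerns \(\mathcal{F}_{i+1}\)-measurable maps.

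Adaptedness, and with it part 3 as stated, actually fails. Take \(\Omega'=\Omega=\{0,1\}\) with the uniform measure. Give \(\Omega'\) the constant filtration \(\mathcal{F}'_i=2^{\{0,1\}}\) and \(\Omega\) the trivial filtration, and let \(\phi=\mathrm{id}\colon\Omega'\to\Omega\). This map is measure- and filtration-preserving, and it is an immersion because martingales for the trivial filtration are constant; here \(c_\phi\) is even a strict morphism of systems. Let \(s\) be the system with state space \(X=\{0,1\}\), interface \((X,1)\), output \(\mathrm{id}_X\) and update \(s^\sharp(x,\ast)=\delta_x\). Its behaviors here are the time-constant processes with \(x_1\) \(\mathcal{F}_1\)-measurable. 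Hence \(\bar B(s)(\phi)\) is the inclusion of the \(2\) constant functions among all \(4\) functions \(\{0,1\}\to\{0,1\}\). Since \(\phi\) is monic, every section over \(\Omega'\) is a matching family for the cover \(\{\phi\}\), and \(y_i=\mathrm{id}\) has no descent. The paper's ``it is easy to see that \(q_n\) is the requisite factorization'' skips exactly this point, so the statement itself needs repair.

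A natural repair is to restrict to objects with \(\mathcal{F}=\bigvee_i\mathcal{F}_i\) up to null sets. This holds for \(\Omega^{\mathbb{N}}\) with its natural filtration and is preserved by the independent pullback. Then for bounded \(\mathcal{F}\)-measurable \(Z\) on \(\Omega\), Lévy's upward theorem gives \(\mathbb{E}[Z\mid\mathcal{F}_k]\to Z\) a.s. Its pullback along the immersion \(\phi\) is a bounded martingale converging to \(Z\circ\phi\), so \(\mathbb{E}[Z\circ\phi\mid\mathcal{F}'_n]=\mathbb{E}[Z\mid\mathcal{F}_n]\circ\phi\) a.s. Apply this to \(Z=h\circ x_n\), where \(h\colon X\hookrightarrow[0,1]\) is a Borel embedding (for standard Borel \(X\)). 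Since \(h\circ y_n\) is \(\mathcal{F}'_n\)-measurable and \(\phi\) is measure-preserving, you get \(\mathbb{E}[Z\mid\mathcal{F}_n]=Z\) a.s. So \(x_n\) has an \(\mathcal{F}_n\)-measurable version, and only then does your law transfer via \cref{rt-41MK} and \cref{ocl-00CW} go through.

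Separately, you were right that the square in part 1 commutes only almost surely, but your first option, choosing versions compatibly, cannot work in general. Take \([0,1]^2\) with Lebesgue measure, \(\mathcal{F}_1=\sigma(\mathrm{pr}_1)\) and \(\mathcal{F}_i\) Borel for \(i\ge 2\). Any version \(\kappa\) of \(P(1_\Omega\mid\mathcal{F}_1)\) depends only on the first coordinate, and there is some \(u_0\) with \(\kappa(u_0,-)=\delta_{u_0}\otimes\lambda\), where \(\lambda\) is Lebesgue measure. The automorphism applying \(v\mapsto v^2\) on the null fibre \(\{u_0\}\times[0,1]\) (identity elsewhere) is an immersion. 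Its update square fails pointwise on that fibre, since \(\delta_{u_0}\otimes\lambda\) is not invariant under it. So part 1 as a strict functor into \(\mathsf{Sys}\) is established neither by you nor by the paper. The workable version is your second option: define \(\bar B(s)(\Omega)\) directly as a.s.-classes of adapted processes satisfying the law a.s.
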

\begin{proof} There is really nothing to prove in constructing the functor \(c_{-}\); the only nontrivial step is the claim that each \(\phi : \Omega  \to  \Gamma \) goes to a morphism of systems, but this is essentially the second equivalent condition in \cref{rt-41MK}. The second part is now just the previously-proven classification of behaviors by system morphisms.

  Note that for a presheaf \(F\) to be separated is precisely the condition that any sections \(x,y \in  F(\Omega )\) which agree on some subspace \(\Omega ' \hookrightarrow  \Omega \) of full measure are already equal. Hence clearly any map \(B(s) \to  F\) where \(F\) is a sheaf factors uniquely over \(\bar {B}(s)\). So it remains to prove that \(B(s)\) is a sheaf on \(\mathbb {P}^\mathbb {N}_\mathrm {im}\). It's clear that it's a presheaf, since pullback along a measure-preserving map preserves almost-sure equality. It is separated essentially by construction.

  To see that it satisfies gluing, let \(\phi : \Gamma  \to  \Omega \) be a measure-preserving map, and let \(x_i: \Gamma  \to  X\) be a stochastic process, satisfying the law of \(s\), which is \(\phi \)-invariant. Then form the independent pullback \(\Gamma  \otimes _\Omega  \Gamma \), noting that the two parallel maps \(\Gamma  \otimes _\Omega  \Gamma  \rightrightarrows  \Gamma \) are equalized by \(\phi \).

  Hence for that measure on the pullback, \(x_n(\gamma _1) = x_n(\gamma _2)\) almost surely. Hence for almost every \(\omega \), there exists some \(q_n(\omega ) \in  X\) so that when \(\gamma \) is drawn from the conditional distribution given \(\omega \), \(x_n(\gamma ) = q_n(\omega )\) with probability \(1\). To see \(q_n\) is measurable, note that for \(A \subset  X\) measurable, \(q_n(\omega ) \in  A\) if and only if \(\gamma  \in  x_n^{-1}(A)\) with probability 1, which is a measurable set since the conditional distribution is a measurable kernel. It is easy to see that \(q_n\) is the requisite factorization of \(x_n\).
\end{proof}

\section{Future work}\label{rt-000U}
We have several directions for future work, mainly aimed at developing clock systems for new systems theories.
\subsection{Representable behaviors for imprecise probability Moore machines}\label{rt-0010}
Imprecise probability is a combination of non-determinism and stochasticity. One approach to imprecise probability is via “infradistributions”, which are convex closed subsets of the set of distributions on a measurable space. However the “infradistribution monad” is not strong, which means that we cannot build a systems theory for it in the same way we do with the Giry or powerset monads. In \cite{liell-cock-2024-compositional}, Liell-Cock and Staton give an account of imprecise probability via a graded monad, which recovers some monoidal structure. In future work we would like to incorporate this graded monad into a systems theory and exhibit clock systems for that systems theory in the same vein as the clock systems we have presented here.
\subsection{Representable behaviors for continuous-time stochastic Moore machines}\label{rt-000Z}
We are attempting to build a systems theory for continuous-time stochastic Moore machines (which would essentially be “open stochastic differential equations”). The foundations for this are not yet in place, but we conjecture that the system that represents trajectories should look something like the following. Let \(\Omega \) be some space with a notion of continuity (exactly what notion is yet undetermined). Then consider
\begin{equation}\bar {\Omega } = \sum _{t \in  \mathbb {R}_{\geq  0}} C([0,t], \Omega )\end{equation}
where \(C([0,t], \Omega )\) is the space of continuous functions from \([0,t]\) to \(\Omega \). If we let \(\Omega  = \mathbb {R}\), then an appropriate update function would look something like extending a continuous function \(\gamma  \colon  [0,t] \to  \mathbb {R}\) to a continuous function \([0,t+s]\) by sampling a Brownian motion on \([t,t+s]\) that starts at \(\gamma (t)\). For \(\Omega  = \mathbb {R}^\mathbb {N}\), we could extend a continuous function \(\gamma  \colon  [0,t] \to  \mathbb {R}\) by sampling countably many Brownian motions.

A behavior of the continuous-time stochastic Moore machine should be given a map that takes a continuous function \(\gamma  \colon  [0,t] \to  \mathbb {R}\) (thought of as a path of a Brownian motion) and performs stochastic integration along it to get the state of the Moore machine at time \(t\).
\nocite{*}\bibliographystyle{plain}\bibliography{\jobname.bib}

\appendix

\section{Supplementary lemmas for probability sheaves}\label{supp-lemmas}

\begin{lemma}[{Commutation of Bayesian inverses}]\label{rt-2219}
  Consider a commutative diagram of probability spaces and deterministic morphisms

\begin{center}
  \begin {tikzcd}
	A & B \\
	C & D
	\arrow [from=1-1, to=1-2]
	\arrow [from=1-1, to=2-1]
	\arrow [from=1-2, to=2-2]
	\arrow [from=2-1, to=2-2]
\end {tikzcd}
\end{center}

  in any Markov category. If they exist, we may form the horizontal Bayesian inverses, giving a square

\begin{center}
  \begin {tikzcd}
	A & B \\
	C & D
	\arrow [from=1-1, to=2-1]
	\arrow [from=1-2, to=1-1]
	\arrow [from=1-2, to=2-2]
	\arrow [from=2-2, to=2-1]
\end {tikzcd}
\end{center}

  Assuming the conditionals exist, this square commutes almost certainly if and only if the original square displays the conditional independence \(B \perp _D C\). In particular, it commutes if and only if the analogous square involving instead the vertical Bayesian inverses commutes almost surely.
\end{lemma}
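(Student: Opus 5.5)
The plan is to translate both composites $B \to C$ in the inverted square into conditionals of one joint distribution, and then use almost-sure uniqueness of conditionals. Write the original square as $f \colon A \to B$, $g \colon A \to C$, $h \colon B \to D$, $k \colon C \to D$, with $hf = kg$. All four maps are deterministic and measure-preserving, with states $p_A, p_B, p_C, p_D$. Let $\psi$ be the joint state on $B \otimes C \otimes D$ obtained by copying $p_A$ and applying $f$, $g$ and $hf = kg$ to the three copies. Since the maps are deterministic, $D$ is a deterministic function both of $B$ (via $h$) and of $C$ (via $k$) under $\psi$. The statement ``the square displays $B \perp_D C$'' means conditional independence for $\psi$.

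The first step is to identify $g \circ f^\dagger$ as a conditional. The Bayesian inverse $f^\dagger$ is characterized by the equation $(\mathrm{id}_B \otimes f^\dagger)\circ \mathrm{copy} \circ p_B = (f \otimes \mathrm{id}_A)\circ\mathrm{copy}\circ p_A$. Post-composing the $A$-leg with $g$ shows that $g f^\dagger$ is a conditional $\psi_{C\mid B}$ of the $(B,C)$-marginal of $\psi$. Similarly, $k^\dagger$ is a conditional $\psi_{C\mid D}$ of the $(D,C)$-marginal, because the $C$-marginal of $\psi$ is $p_C$ and $D = k(C)$. Hence $k^\dagger h$ is the composite $B \to D \to C$ given by $\psi_{C\mid D}\circ h$.

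The second step uses a standard characterization of conditional independence when the conditioning variable is a deterministic function of $B$: $B \perp_D C$ holds iff $\psi_{C\mid B} = \psi_{C\mid D}\circ h$, $p_B$-almost surely. In one direction, if the equation holds, then the joint of $(B,C,D)$ can be written as $p_D$, then $B$ sampled via $\psi_{B\mid D}$ and $C$ via $\psi_{C\mid D}$ independently, which is the string-diagram definition of conditional independence. Conversely, that factorization exhibits $\psi_{C\mid D}\circ h$ as a conditional of $C$ given $B$. By almost-sure uniqueness of conditionals, ``$g f^\dagger =_{p_B\text{-a.s.}} k^\dagger h$'' is then equivalent to $B \perp_D C$.

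Finally, conditional independence is symmetric in $B$ and $C$. Running the same argument with the roles of rows and columns exchanged (vertical inverses $g^\dagger$, $h^\dagger$, giving $f g^\dagger$ versus $h^\dagger k$ as maps $C \to B$) shows that the vertical square also commutes a.s. iff $C \perp_D B$. This gives the ``in particular'' clause.

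I expect the main obstacle to be the second step, done carefully in a general Markov category. Almost-sure equality is taken relative to $p_B$, and one must check that composing with $h$ respects almost-sure equality (it does, since $h$ is measure-preserving). One must also make sure the deterministic copy of $D$ can be absorbed into the $B$-leg without changing $\psi$. I would handle this by working with string diagrams and the standard facts that deterministic morphisms commute with copying and that a.s.-equality is preserved by precomposition with measure-preserving maps.
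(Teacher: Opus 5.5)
Your proposal is correct and follows essentially the same route as the paper: both reduce $B \perp_D C$ (with $D$ a deterministic function of $B$) to the conditional of $C$ given $B$ factoring through $h$, note that the factor must then be the conditional of $C$ given $D$, and conclude by almost-sure uniqueness of conditionals. Your version makes explicit what the paper leaves implicit, namely that $g f^\dagger$ and $k^\dagger$ are exactly these conditionals, and that the vertical-inverse clause follows from the symmetry of conditional independence.
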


\begin{proof} Since \(D\) is a deterministic function of \(B\), this conditional independence is equivalent to the statement that the conditional distribution of \(C\) given \(B\) factors over \(B \to  D\). But then this factorization must be the conditional distribution of \(C\) given \(D\), which is the desired commutativity condition.
\end{proof}

	We will refer to a square of probability spaces which displays the conditional independence \(B \perp _D C\) as here as an \emph{independent square}. Note that these play an important role in the theory of \cite{simpson-2017-probability} as well.

\begin{lemma}[{Independent squares}]\label{rt-1LTT}
  In any Markov category \(\mathsf {C}\), for \(i=0,1\), let \(\Gamma _i \to  \Lambda _i \leftarrow  \Omega _i\) be a cospan of probability spaces,
  and let measure-preserving maps \(\Gamma _1 \to  \Gamma _0, \Omega _1 \to  \Omega _0, \Lambda _1 \to  \Lambda _0\) be given so that the two squares commute, and further so that these squares are both independent in the sense of \cref{rt-2219}.
  Suppose the independent pullbacks \(\Gamma _i \otimes _{\Lambda _i} \Omega _i\) both exist. Then the square

\begin{center}
\begin {tikzcd}
	{\Gamma _1 \otimes _{\Lambda _1} \Omega _1} & {\Gamma _0 \otimes _{\Lambda _0} \Omega _0} \\
	{\Gamma _1} & {\Gamma _0}
	\arrow [from=1-1, to=1-2]
	\arrow [from=1-1, to=2-1]
	\arrow [from=1-2, to=2-2]
	\arrow [from=2-1, to=2-2]
\end {tikzcd}
\end{center}

  is again independent.
\end{lemma}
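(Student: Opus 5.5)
The plan is to reduce everything to one conditional-independence statement and check it with the factorization criterion from the proof of \cref{rt-2219}. Label the corners of the target square as in \cref{rt-2219}: $A = \Gamma_1 \otimes_{\Lambda_1} \Omega_1$, $B = \Gamma_0 \otimes_{\Lambda_0} \Omega_0$, $C = \Gamma_1$, $D = \Gamma_0$. Independence of this square means $B \perp_D C$.

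On $A$ I will write $\gamma_1, \omega_1, \lambda_1$ for the canonical variables, where $\lambda_1$ is the common image of $\gamma_1$ and $\omega_1$ in $\Lambda_1$. Their images under the given maps are $\gamma_0, \omega_0, \lambda_0$. Since $B$ is built from the pair $(\gamma_0, \omega_0)$ and $\gamma_0$ is the variable of $D$, the claim $B \perp_D C$ reduces to $\omega_0 \perp_{\gamma_0} \gamma_1$. By the argument in \cref{rt-2219}, it suffices to show that the conditional of $\omega_0$ given $\gamma_1$ factors through the deterministic map $\gamma_1 \mapsto \gamma_0$. The factor is then automatically the conditional of $\omega_0$ given $\gamma_0$.

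The computation has three steps.
\begin{enumerate}
\item By the defining property of the independent pullback, $\gamma_1 \perp_{\lambda_1} \omega_1$. Since $\lambda_1$ is a deterministic function of $\gamma_1$, the conditional of $\omega_1$ given $\gamma_1$ equals the conditional of $\omega_1$ given $\lambda_1$, precomposed with $\Gamma_1 \to \Lambda_1$. Pushing forward along $\Omega_1 \to \Omega_0$, the conditional of $\omega_0$ given $\gamma_1$ is the conditional of $\omega_0$ given $\lambda_1$.
\item Use independence of the square $\Omega_1 \to \Omega_0$, $\Omega_1 \to \Lambda_1$, $\Lambda_1 \to \Lambda_0$, $\Omega_0 \to \Lambda_0$. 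By \cref{rt-2219} both orientations are equivalent, so I may take it in the form $\Lambda_1 \perp_{\Lambda_0} \Omega_0$. Then the conditional of $\omega_0$ given $\lambda_1$ factors through $\lambda_0$ and equals the conditional of $\omega_0$ given $\lambda_0$.
\item Since $\lambda_0$ is the image of $\gamma_0$ under $\Gamma_0 \to \Lambda_0$, the conditional of $\omega_0$ given $\gamma_1$ factors as $\Gamma_1 \to \Gamma_0 \to \Lambda_0 \to \Omega_0$.
\end{enumerate}
This is the required factorization. Notably, only the $\Omega$-side square is used; the $\Gamma$-side independence seems unnecessary, though having it allows the symmetric argument.

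The main obstacle is doing this honestly in a general Markov category rather than with measure-theoretic variables. All equalities of conditionals hold only almost surely, so each step must be phrased as an almost-sure equality of kernels relative to the relevant marginal. Pushforward along $\Omega_1 \to \Omega_0$ and precomposition with deterministic maps must be checked to preserve these almost-sure equalities. I would first write each step as a string-diagram identity: the pullback as the composite that copies $\Lambda_1$ and applies both Bayesian inverses, and each independence as the commuting square of Bayesian inverses from \cref{rt-2219}. The argument then becomes a chain of rewrites. The delicate point is the move from ``conditional given $\gamma_1$ factors through $\gamma_0$'' to the independence statement, which I would take directly from the proof of \cref{rt-2219}.
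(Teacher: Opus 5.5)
Your proposal is correct and is essentially the paper's argument. Steps 1–2 unpack the paper's "independent squares compose": pasting the level-1 pullback square onto the $\Omega$-side square gives $\Omega_0 \perp_{\Lambda_0} \Gamma_1$. Step 3, together with your reduction to $\omega_0 \perp_{\gamma_0} \gamma_1$, is the paper's closing remark that $\Gamma_0$ is a deterministic function of $\Gamma_1$. Your observation that the $\Gamma$-side independence is never used applies to the paper's proof as well.
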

\begin{proof}
  Consider this diagram:

\begin{center}
  \begin {tikzcd}
	&&&& {\Gamma _0\otimes _{\Lambda _0}\Omega _0} \\
	& {\Gamma _1\otimes _{\Lambda _1}\Omega _1} && {\Gamma _0} && {\Omega _0} \\
	{\Gamma _1} && {\Omega _1} && {\Lambda _0} \\
	& {\Lambda _1}
	\arrow [from=1-5, to=2-4]
	\arrow [from=1-5, to=2-6]
	\arrow [from=2-2, to=1-5]
	\arrow [from=2-2, to=3-1]
	\arrow [from=2-2, to=3-3]
	\arrow [from=2-4, to=3-5]
	\arrow [from=2-6, to=3-5]
	\arrow [from=3-1, to=2-4]
	\arrow [from=3-1, to=4-2]
	\arrow [from=3-3, to=2-6]
	\arrow [from=3-3, to=4-2]
	\arrow [from=4-2, to=3-5]
\end {tikzcd}
\end{center}

  By \cref{rt-2219}, independent squares clearly compose, so that \(\Omega _0\) is independent of \(\Gamma _1\) given \(\Lambda _0\). But since \(\Gamma _0\) is a deterministic function of \(\Gamma _1\), this is equivalent to the independence expressed by the given square.
\end{proof}

\begin{lemma}[{Projections are immersions}]\label{rt-U1T3}
  Let \(\Omega _1 \to  \Omega  \leftarrow  \Omega _2\) be a cospan in \(\mathbb {P}^\mathbb {N}_\mathrm {im}\).
  If the pullback \(\Omega _1 \otimes _\Omega  \Omega _2\) is equipped with the natural filtration given in degree \(i\) by the intersection of \(\mathcal {F}_i^1 \otimes  \mathcal {F}_i^2\) and the diagonal, then the projection maps \(\Omega _1 \otimes _\Omega  \Omega _2 \to  \Omega _1, \Omega _2\) are immersions.
\end{lemma}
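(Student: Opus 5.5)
By symmetry it suffices to treat the projection \(\pi_1 \colon \Omega_1 \otimes_\Omega \Omega_2 \to \Omega_1\). I will use the second characterization in \cref{rt-41MK}: \(\pi_1\) is an immersion exactly when, for each \(i\), the square whose horizontal arrows are the conditional kernels \((-,\mathcal{F}_i) \to (-,\mathcal{F}_{i+1})\) on the pullback and on \(\Omega_1\), and whose vertical arrows are \(\pi_1\), commutes in \(\mathsf{Stoch}\). Everything is standard Borel, so these kernels exist and the criterion applies.

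\textbf{Step 1: reduce to a conditional independence statement.} Write \(P_\otimes\) for the independent-pullback measure and \(\mathcal{G}_i\) for the \(i\)th filtration degree on the pullback, namely \(\pi_1^{-1}\mathcal{F}^1_i \vee \pi_2^{-1}\mathcal{F}^2_i\). The commutation condition says that for \(V \in \mathcal{F}^1_{i+1}\) we have \(P_\otimes(\pi_1^{-1}V \mid \mathcal{G}_i) = P_1(V \mid \mathcal{F}^1_i)\circ \pi_1\) almost surely. Since the right-hand side is \(\mathcal{G}_i\)-measurable, this is equivalent to the conditional independence
\[
\pi_1^{-1}\mathcal{F}^1_{i+1} \perp \pi_2^{-1}\mathcal{F}^2_i \;\big|\; \pi_1^{-1}\mathcal{F}^1_i .
\]
This is the same argument pattern as \cref{rt-2219}: a conditional factors through a coarser \(\sigma\)-algebra exactly when the corresponding independence holds.

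\textbf{Step 2: establish the independence.} Under \(P_\otimes\), \(\Omega_1\) and \(\Omega_2\) are conditionally independent given \(\Omega\). The approach is to apply \cref{rt-1LTT} to the coarsened cospans \((\Omega_1,\mathcal{F}^1_{i+1}) \to (\Omega,\mathcal{F}_{i+1}) \leftarrow (\Omega_2,\mathcal{F}^2_i)\) and so on. Concretely:

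1. The tower property gives \(\mathcal{F}^1_{i+1} \perp \mathcal{F}^2_i \mid \mathcal{F}\), which after projecting becomes conditioning on the pullback of \(\mathcal{F}\) from \(\Omega\).
2. Next, replace conditioning on all of \(\mathcal{F}\) by conditioning on \(\mathcal{F}^1_i\). This uses that \(\Omega_2 \to \Omega\) is an immersion: the information \(\mathcal{F}^2_i\) carries about \(\mathcal{F}\) factors through \(\mathcal{F}_i\), that is, \(\mathcal{F}^2_i \perp \mathcal{F} \mid \mathcal{F}_i\) in the sense relevant to future increments.
3. It also uses that \(\Omega_1 \to \Omega\) is an immersion, which makes \(\mathcal{F}^1_{i+1}\) and \(\mathcal{F}_i\) interact only through \(\mathcal{F}^1_i\); here \(\mathcal{F}_i\) is pulled back into \(\mathcal{F}^1_i\), since the maps preserve filtrations.
4. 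Chaining these three independences with the contraction and weak-union rules gives the target independence.

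\textbf{Main obstacle.} The hard part is step 2, specifically extracting from the immersion condition on \(\Omega_2 \to \Omega\) the statement that \(\mathcal{F}^2_i\) is conditionally independent of the full \(\mathcal{F}\) (or at least of \(\mathcal{F}_{i+1}\) and beyond) given \(\mathcal{F}_i\). The immersion condition is only one-step commutation of kernels. I would first show by induction, using \cref{rt-41MK} repeatedly, that commutation holds for all \(\mathcal{F}_i \to \mathcal{F}_n\) with \(n>i\). I would then pass to \(\mathcal{F} = \bigvee_n \mathcal{F}_n\) via a monotone class argument, possibly assuming \(\mathcal{F}\) is generated by the filtration. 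If that generation assumption fails, I would instead condition only on \(\mathcal{F}_{i+1}\), which is all that steps 1 to 4 actually require, using \cref{rt-2219} to turn the one-step commutation directly into the needed independence square.
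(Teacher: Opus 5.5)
The genuine problem is the fallback at the end. Item 1 of your Step 2 is conditional independence given the \emph{full} \(\mathcal{F}\), which is all the independent pullback provides. Conditional independence does not descend to a coarser conditioning \(\sigma\)-algebra such as \(\mathcal{F}_{i+1}\), so ``condition only on \(\mathcal{F}_{i+1}\)'' breaks that step.

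No other repair is possible, because the lemma is false unless \(\mathcal{F}=\bigvee_n\mathcal{F}_n\) on the base (up to null sets). Take \(\Omega=\Omega_1=\Omega_2=\{0,1\}\) with the uniform measure, the full \(\sigma\)-algebra and identity maps. Give \(\Omega\) the trivial filtration, give \(\Omega_1\) the filtration with \(\mathcal{F}^1_1\) trivial and \(\mathcal{F}^1_n\) full for \(n\ge 2\), and give \(\Omega_2\) the filtration that is full in every degree.
\begin{itemize}
\item Martingales for the trivial filtration are a.s.\ constant, so both maps are immersions.
\item The independent pullback is the diagonal with uniform measure. Its degree-\(1\) \(\sigma\)-algebra is already full, because it contains \(\pi_2^{-1}\mathcal{F}^2_1\).
\item The \(\Omega_1\)-martingale \(X_1=0\), \(X_n(\omega)=2\omega-1\) for \(n\ge 2\) pulls back to a process with \(\mathbb{E}[X_2\circ\pi_1\mid\mathcal{G}_1]=X_2\circ\pi_1\neq X_1\circ\pi_1\).
\end{itemize}
So \(\pi_1\) is not an immersion.

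With that hypothesis stated explicitly, your main route is correct. Write \(A=\pi_1^{-1}\mathcal{F}^1_{i+1}\), \(A_0=\pi_1^{-1}\mathcal{F}^1_i\), \(B=\pi_2^{-1}\mathcal{F}^2_i\), and \(H\), \(H_i\) for the pullbacks of \(\mathcal{F}\), \(\mathcal{F}_i\).
\begin{itemize}
\item The pullback measure gives \(A\perp B\mid H\).
\item The immersion \(\Omega_2\to\Omega\), upgraded by your induction plus monotone-class argument, gives \(B\perp H\mid H_i\).
\item Contraction, decomposition and weak union, using \(H_i\subseteq A_0\subseteq A\), then give \(A\perp B\mid A_0\).
\end{itemize}
Your item 3 is superfluous: of \(\Omega_1\to\Omega\) you only use \(\phi_1^{-1}\mathcal{F}_i\subseteq\mathcal{F}^1_i\).

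This is the unpacked form of the paper's argument. The paper converts to independent squares via Lemma~\ref{rt-2219} and then cites Lemma~\ref{rt-1LTT} for the degree-\(i\) and degree-\((i+1)\) cospans. That citation silently identifies the independent pullback over \((\Omega,\mathcal{F}_{i+1})\) with the \((i+1)\)st filtration degree of the pullback over \((\Omega,\mathcal{F})\). That identification again needs \(\mathcal{F}=\bigvee_n\mathcal{F}_n\): in the example above, one is a product and the other is the diagonal. Your version has the advantage of showing exactly where the hypothesis enters. So state it as an assumption, or form the pullback over \(\bigvee_n\mathcal{F}_n\), instead of offering the fallback.
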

\begin{proof}
    It's clear that they are compatible with the filtration, so the only thing to verify is that they also commute with the conditional distributions. By \cref{rt-2219}, this is equivalent to checking that certain projection squares are independent, which follows from \cref{rt-1LTT}.
\end{proof}

\end{document}